\newtheorem{theorem}{Theorem}
\newtheorem{corollary}[theorem]{Corollary}
\newtheorem{proposition}[theorem]{Proposition}
\newtheorem{lemma}[theorem]{Lemma}
\newtheorem{definition}[theorem]{Definition}
\begin{document}

\author[1]{Spyros Angelopoulos\footnote{Corresponding author.}}
\author[2]{Thomas Lidbetter}
\affil[1]{CNRS and Sorbonne   Universit\'e,  Laboratoire   d’Informatique  de  Paris   6, 4 Place Jussieu, Paris, France 75252.
{\tt spyros.angelopoulos@lip6.fr}}
\affil[2]{Department of Management Science and Information Systems, Rutgers Business School, Newark, NJ 07102, USA.
{\tt tlidbetter@business.rutgers.edu}}

\title{Competitive Search in a Network}

\maketitle

\begin{abstract}
We study the classic problem in which a {\em Searcher} must locate a hidden point, also called the {\em Hider} in a network,
starting from a root point. The network may be either bounded or unbounded, thus generalizing well-known settings such as 
linear and star search. We distinguish between pathwise search, in which the Searcher follows
a continuous unit-speed path until the Hider is reached, and expanding search, in which, at any point in time, 
the Searcher may restart from any previously reached point. The former has been the usual paradigm for studying search games, whereas the latter is a more recent paradigm that can model real-life settings such as hunting for a fugitive, demining a field, or search-and-rescue operations. We seek both deterministic and randomized search strategies that minimize the {\em competitive ratio}, namely the worst-case ratio of the Hider's discovery time, divided by the shortest path to it from the root. Concerning expanding search, we show that a simple search strategy that applies a ``waterfilling'' principle has optimal deterministic competitive ratio; in contrast, we show that the optimal randomized competitive ratio is attained by fairly complex strategies
even in a very simple network of three arcs. Motivated by this observation, we present and analyze an expanding search strategy that is a 5/4 approximation of the randomized competitive ratio. Our approach is also applicable to pathwise search, for which we give a strategy that is
a 5 approximation of the randomized competitive ratio, and which improves upon strategies derived from previous work.
\end{abstract}

\noindent
{\bf Keywords:} Game Theory; Search games; Competitive analysis; Networks.

\section{Introduction}
\label{sec:intro}

We consider the classic setting in which a mobile {\em Searcher} must locate a stationary hidden object, called the {\em Hider}, 
in a network $Q$ with given arc lengths. This general problem goes back to early work in~\cite{isaacs1999differential} and~\cite{gal1979search}, 
who introduced it in the context of the standard, {\em pathwise} search; namely, in this usual setting, the Searcher moves at unit speed starting from a given point $O$ of the network that we call the {\em root}, and the {\em search time} is defined as the first time at which the Searcher reaches the Hider. 
A different approach was recently introduced in~\cite{AL:expanding}, and allows the Searcher to move at infinite speed
within any region of the  network that it has already visited; see Section~\ref{subsec:prelim.expanding} for a formal definition. 
This paradigm captures several situations in which the cost of re-exploration is negligible, compared to the cost of first-time exploration, 
and thus can model settings such as mining for coal, hunting a fugitive, or searching for a missing person. 

The above works take the approach of seeking mixed, i.e., randomized search strategies, with the objective of minimizing the expected search time,
in the worst case; that is, the maximum expected search time over all hiding points in the network. This is accomplished by studying a zero-sum game
with payoff the search time, between a minimizing Searcher and a maximizing Hider. In this paper, instead, we study a normalized
variant of the search time, in which the search time for reaching a point $p$ in $Q$ is divided by the {\em shortest path} from $O$ to $p$ in $Q$; we call
this the {\em normalized search time of $p$}.  The objective thus becomes to find strategies that minimize the worst-case (normalized) search time, by considering all points in the network $Q$. 

This normalized formulation was first applied in search games over unbounded domains, such as the {\em linear search}~\cite{beck:yet.more}
and {\em star search}~\cite{gal:general} problems. Normalization is essential in unbounded domains, since otherwise the Hider can induce unbounded search times,
by hiding arbitrarily far from $O$. Further motivation behind the study of normalized objectives is provided by 
{\em competitive analysis of online algorithms} in which the algorithm operates in a status of total uncertainty about the input, and the normalized
objective describes how much close the algorithm's output is, in comparison to an ideal solution with complete information on the input. 
For this reason,~\cite{jaillet:online} refer to searching under the competitive ratio as {\em online searching}. 
Competitive analysis has been applied even in search games over a bounded domain, as in~\cite{koutsoupias:fixed,fleischer:online,ANGELOPOULOS201951}. We will refer to the {\em competitive ratio of a strategy} as the worst-case normalized search time 
among all points of the network\footnote{In~\cite{koutsoupias:fixed,ANGELOPOULOS201951} the term {\em search ratio} is used in order to refer to the 
competitive ratio. In this work we choose the latter, since it is more prevalent, and since it has been adopted both by the Operations Research
and the Computer Science communities; see e.g., the discussion in~\cite{searchgames} and~\cite{jaillet:online}.}. 
Lastly, we define the {\em competitive ratio of a network $Q$} (with a given root $O$) 
as the minimum competitive ratio of any search strategy for $Q$. We will further distinguish between the {\em deterministic} and the {\em randomized} competitive ratios, depending on whether we consider deterministic or randomized search strategies, respectively. 

\subsection{Main results}
\label{subsec:contribution}

In this work we study the competitive ratio of general networks, both in the expanding and the pathwise search paradigms, which are defined precisely in Section~\ref{sec:preliminaries}. For expanding search, we first show in Section~\ref{sec:deterministic} that the deterministic competitive ratio is achieved by a simple strategy. 
This strategy can be visualized as the frontier that is obtained  by ``flooding'' the network starting at $O$, assuming that the arcs
represent pipes of corresponding lengths. We then move to randomized strategies for expanding search in Section~\ref{sec:random}. Here, we show that, unlike the deterministic case, optimal search strategies have a complex statement even on a very simple network that consists of three arcs. Motivated by this observation, we give approximations to the value
of the game. First, we show that the randomized competitive ratio of a network is within a factor of 2 of its deterministic competitive ratio, and this bound is tight. 
More importantly, we give a class of randomized strategies that approximate the randomized competitive ratio of a network within a factor of $5/4$. 
This class of strategies is based on iterative applications of Randomized Depth-First-Search, in randomly chosen and increasingly large subsets of the network. 
This strategy is inspired by a randomized strategy used for tree graphs in the {\em discrete} setting, namely when the Hider can only hide over 
vertices of a given, finite tree~\cite{ANGELOPOULOS201951}. We emphasize that, unlike~\cite{ANGELOPOULOS201951}, in this work, the search domain 
may be substantially more complex than a tree, and it may also be unbounded. 

Moreover, we give further approximations of the value of the game by relating the payoff of the search strategy to the function $f_Q$, which 
informally gives the measure of the set of points within a certain given radius from the root. As a corollary, we show that if the function $f_Q$ is concave, the randomized competitive ratio is identical to the deterministic one. This finding may have practical implications in the context of searching in a big city, since the road network is naturally much more dense in its center than it its outskirts, and one expects this density to decrease the further we move from the city 
center. 

Our approach in studying expanding search, and more specifically, our lower bounds on the randomized competitive ratio, have implications for
pathwise search as well. More precisely, in Section~\ref{sec:pathwise} we give a randomized pathwise search strategy, inspired by the one for expanding search, which is a 5-approximation
of the randomized competitive ratio. This is an improvement over the $3+2\sqrt{2} \approx 5.828$-approximation that can be derived from techniques
in~\cite{koutsoupias:fixed}. 

In Section~\ref{sec:implementation} we discuss some technicalities relating to the implementation of our search strategies, and in Section~\ref{sec:conclusion} we conclude with directions for future work.

To illustrate the significance of the results and the approaches, consider the star-search problem in which the search domain consists of $m$ infinite,
concurrent rays (Figure~\ref{fig:star.example}). 
Star search has a long history of research, and several of variants of this problem have been studied under the competitive ratio 
(see Chapters 7 and 9 in~\cite{searchgames}). 
It is known that the deterministic competitive ratio is equal to $1+(2\frac{m}{m-1})^\frac{m}{m-1}$~\cite{gal:general}.
In contrast, the randomized competitive ratio is not known (in~\cite{hybrid} optimality is shown under the fairly restrictive assumption of {\em periodic}
strategies). The strategy we obtain in this work has randomized competitive ratio which is at most a factor of 5 from the optimal one. 
Furthermore, the result applies to much more complicated unbounded domains, for instance such as the one depicted in Figure~\ref{fig:general.example}, under the mild
(and necessary) assumption that for any $r>0$, the number of points at distance $r$ from the root of the network is bounded. 
\begin{figure}[htb!]
   \subfloat[A star domain in which $m=4$.]
   {
   \begin{minipage}{0.45\textwidth}
    \centering
  \includegraphics[scale=0.25]{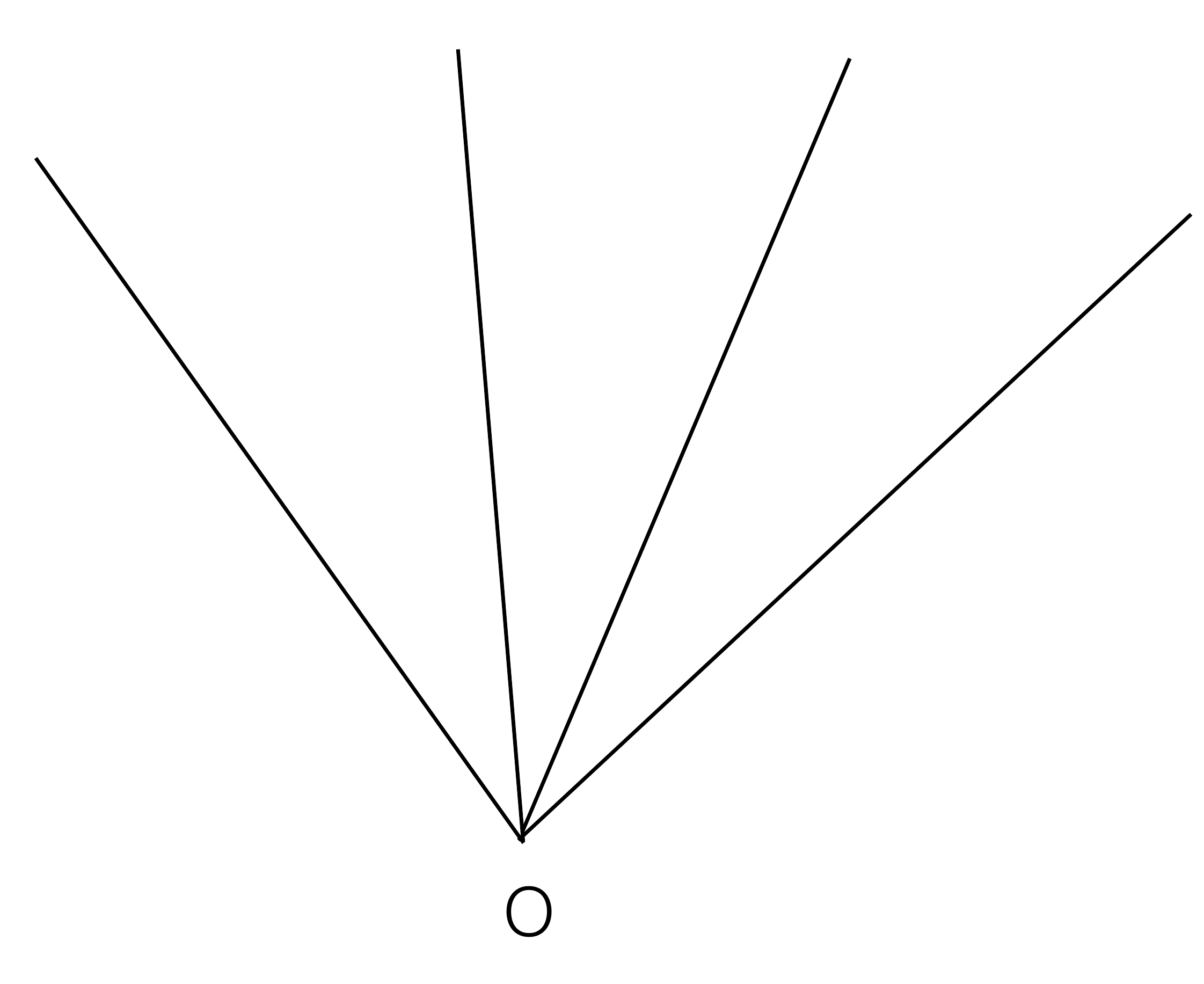} 
    \end{minipage}
    \label{fig:star.example}
    } \qquad
  \subfloat[An example of a search domain studied in this work.]
  {
    \begin{minipage}{0.45\textwidth}
    \centering
       \includegraphics[scale=0.25]{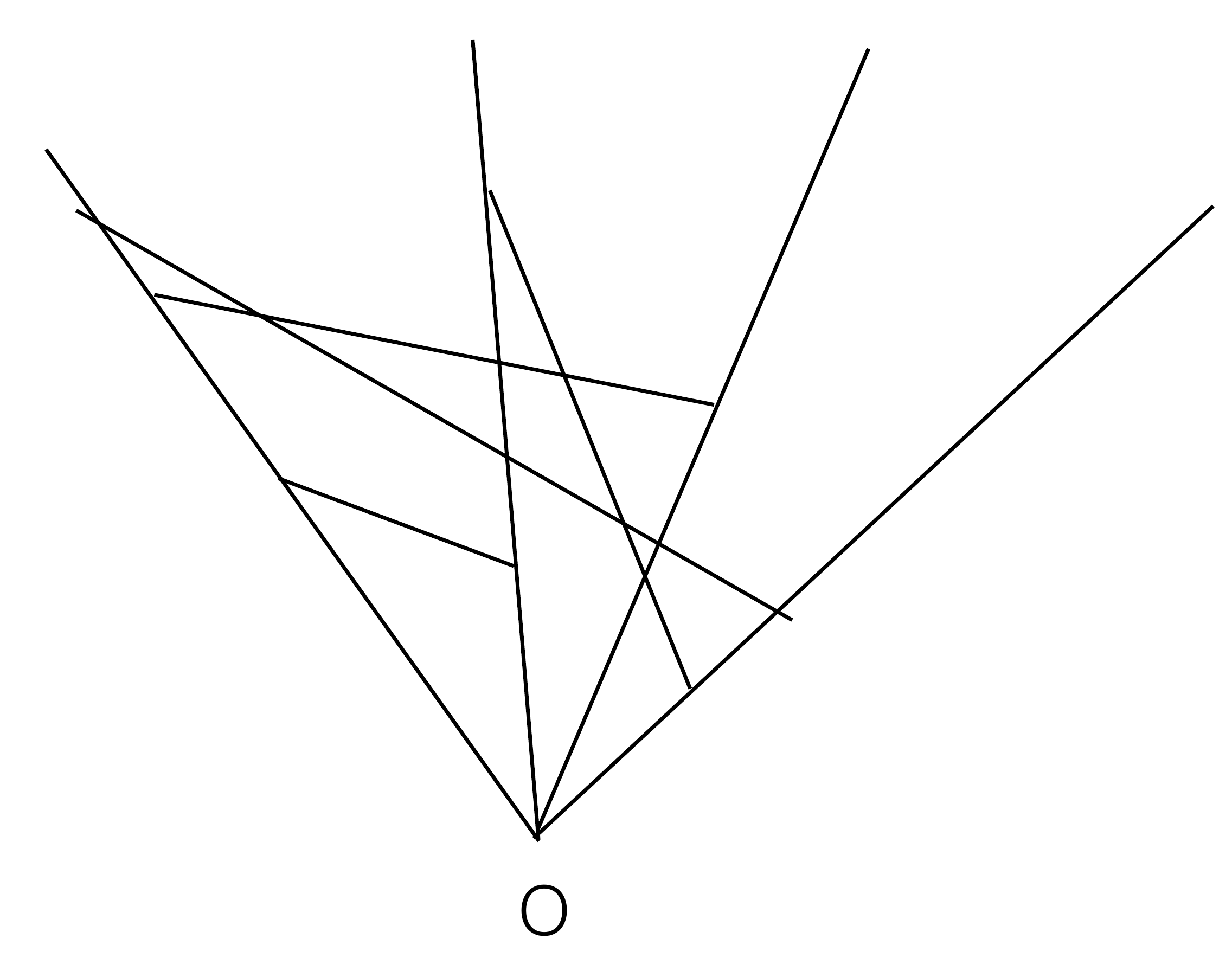}
    \end{minipage}
    \label{fig:general.example} 
  }  \caption{An illustration of different search domains.}
\end{figure}

\subsection{Related work}
\label{subsec:related.work}

Expanding search on a network was introduced in~\cite{AL:expanding}, with the focus on the Bayesian problem of minimizing the
expected search time against a known Hider distribution. In a followup paper~\cite{AL:19.general}, the same authors
studied expanding search on general networks and gave two strategy classes that have expected search times that are within
a factor close to 1 of the value of the game. Both these works apply to the unnormalized search time. 
For normalized objectives~\cite{ANGELOPOULOS201951} recently studied expanding search in a fixed (finite) graph in which the Hider can only hide on
vertices. In terms of finding a strategy of optimal deterministic competitive ratio~\cite{ANGELOPOULOS201951} 
showed that the problem is NP-hard, and gave a $4\ln 4$ approximation. Concerning the randomized competitive ratio, the same work
presented a strategy that is a $5/4$-approximation in the special case of tree graphs. 

The competitive ratio of pathwise search was first studied by Beck and Newman in the context of the linear search problem~\cite{beck:yet.more}
and later by Gal~\cite{gal:general,gal:minimax} for star search. For fixed graphs, assuming that the Hider can only hide on vertices, it is 
NP-hard to approximate the deterministic competitive ratio~\cite{koutsoupias:fixed}. The same paper also gave constant-factor approximations
for both the deterministic and the randomized competitive ratio, assuming the graph is undirected. Extensions to edge-weighted graphs
were studied in~\cite{DBLP:conf/ciac/AusielloLM00}, which also showed connections between graph searching and classic optimization
problems such as the Traveling Salesman problem and the Minimum Latency problem. The setting in which the search graph is not known
to the Searcher, but is rather revealed as the search progresses was studied in~\cite{fleischer:online}.

The exact and approximate competitive ratio of pathwise search has been studied in many settings, mostly assuming 
a star-like search domain. Examples include multi-Searcher strategies~\cite{alex:robots,Angelopoulos2016}, 
searching with turn cost~\cite{demaine:turn,Angelopoulos2017}, searching with probabilistic information~\cite{jaillet:online},
searching with upper/lower bounds on the distance of the Hider from the root~\cite{DBLP:journals/dam/HipkeIKL99,ultimate,revisiting:esa}, 
and searching for multiple hiders~\cite{multi-target,oil,hyperbolic}. All these works assume
that the search domain is either the unbounded line or the unbounded star.

\section{Preliminaries}
\label{sec:preliminaries}

%In this section we present some preliminary definitions and argue about the existence of 
%the deterministic and randomized competitive ratios in the problems we study. 
We consider a search domain that is represented by a connected network $Q$ which consists of vertices and arcs, and which has 
a certain vertex $O$ designated as its root. Moreover, $Q$ is endowed with Lebesgue measure corresponding to length. 
The measure of a subset $A$ of $Q$ is denoted by $\lambda(A)$, and in the case that $Q$ has finite measure, we will denote by $\mu =\lambda (Q)$ the total measure of $Q$.
This defines a metric on $Q$, where $d(x,y)$ is the length of the shortest path from $x$ to $y$. We write $d(x)$ for the distance $d(O,x)$ from $O$ to $x$. We denote by $\textrm{deg}_Q(v)$ the degree of $v$ in $Q$, namely the number of arcs incident to $v$.

We do not limit ourselves to bounded networks, but make the standing assumption that the network $Q$ satisfies the condition that there exists some integer $M$ such that for any $r > 0$,
\begin{align}
|\{x \in Q: d(x) = r \}| \le M. \label{eq:cond}
\end{align}
That is, there are at most $M$ points at distance $r$ from $O$. Any network with a finite number of arcs automatically satisfies this condition.  We will see that this condition ensures that the competitive ratio exists. As an example of an unbounded network, if $Q$ is an $m$-ray star, we have 
$|\{x \in Q: d(x) = r \}|=m$, for all $r$, hence this quantity is bounded. In contrast, if $Q$ is an unbounded full binary
tree in which there are $2^i$ vertices at distance $i$ from the root, for all $i \in \mathbb{N}^+$, then this quantity is unbounded, and this implies the competitive ratio is also unbounded. Indeed, any deterministic search strategy (pathwise or expanding) on this network must take at least time $2^i$ to reach all points at distance $i$ from the root, so the deterministic competitive ratio would be at least $2^i/i$, which is unbounded. We will see later (Proposition~\ref{prop:cont-random}) that this implies that the randomized competitive ratio of this tree network is also unbounded.

Given a network $Q$, and any $r\geq 0$, we denote the closed disc of radius $r$ around $O$ by $Q[r]=\{ x\in Q:d(x)\leq r\}$. Let $r_{\max}=\max_{x \in Q} d(x)$ be the distance of the furthest point in $Q$ from $O$, where $r_{\max}= \infty$ if $Q$ is unbounded. We define the 
real function $f_Q:[0,r_{\max}] \rightarrow \mathbb{R}$ given by $f_Q(r)=\lambda (Q[r])$, so $f_Q(r)$ is the measure of the set of points 
at distance no more than $r$ from the root.

We begin with preliminary definitions and results concerning expanding search, since it is a more recent paradigm, and somewhat 
more subtle to define. We then explain how these definitions change in what concerns pathwise search.

\subsection{Expanding search}
\label{subsec:prelim.expanding}

In expanding search, we allow the search to move at no cost over any part of the network that it has previously explored. This is formalized in the following definition.

\begin{definition}[\cite{AL:expanding}]
	\label{def:cont-exp}
	An expanding search on a network $Q$ with root $O$ is a family of
	connected subsets $S(t)\subset Q$ (for $0 \leq t\leq \mu $) satisfying:
	(i) $S(0)=O$; (ii) $S(t)\subset S(t^{\prime })$ for all $t\le t^{\prime }$; and (iii)
	$\lambda (S(t))=t$ for all $t$.
\end{definition}

If the context is clear, we will refer to an expanding search as a \em{search} \em{search strategy. For a given expanding search $S$ of $Q$ and a point $H\in Q$, let $T(S,H)=\min \{t:H\in S(t)\} $ be the \emph{(expanding) search time} of $H$ under $S$.
This was shown to be well defined in~\cite{AL:expanding}.
For $H \neq O$, let $\hat{T}(S,H)$ be the ratio $T(S,H)/d(H)$ of the search time of $H$ to the distance of $H$ from the root. 
We refer to $\hat{T}(S,H)$ as the \emph{normalized search time}.
It is convenient to define $\hat{T}(S,O)$ to be equal to $0$.

\begin{definition}	\label{def:det.search.cont}
	The deterministic competitive ratio $\sigma_S=\sigma_S(Q)$ of a deterministic expanding search $S$ of a network $Q$ is given by
	\[
  \sigma_S(Q) = \sup_{H \in Q} \hat{T}(S,H).
  \]
	The (deterministic, expanding) competitive ratio, $\sigma = \sigma(Q)$ of $Q$ is given by
	\[
	\sigma(Q) = \inf_S \sigma_S(Q),
	\]
	where the infinum is taken over all search strategies $S$. If $\sigma_S=\sigma$ we say that $S$ is optimal.
\end{definition}

Note that the competitive ratio of a strategy $S$ may be infinite. For example, suppose that $Q$ consists of two unit-length arcs $a$ and $b$ meeting at the root and suppose $S$ searches $a$ first and then $b$. If $H$ lies on the arc $b$ at distance $x$ from the root then $\hat{T}(S,H)=(1+x)/x=1+1/x \rightarrow \infty$ as $x \rightarrow 0$. It is not immediately obvious whether or not the competitive ratio of a  network is finite in general, 
but we will show in Section~\ref{sec:deterministic} that this is indeed the case, by explicitly giving the optimal search strategy for any network.

In addition, we consider {\em randomized} search strategies: that is, search strategies that are chosen according to some probability distribution. 
We denote randomized strategies by lower case letters, and for randomized strategies $s$ and $h$ for the Searcher and the Hider, respectively, 
we denote the \emph{expected search time} 
by $T(s,h)$ and the~\emph{expected normalized search time} by $\hat{T}(s,h)$.

\begin{definition} 	\label{def:rand.search.cont}
	The randomized competitive ratio $\rho_s=\rho_s(Q)$ of a randomized expanding search $s$ of a network $Q$ is given by
	\[
	\rho_s(Q) = \sup_{H \in Q} \hat{T}(s,H).
	\]
	The randomized competitive ratio, $\rho = \rho(Q)$ of $Q$ is given by
	\[
	\rho(Q) = \inf_s \rho_s(Q),
	\]
	where the infimum is taken over all possible randomized search strategies $s$. If $\rho_s=\rho$ we say
	that $s$ is optimal.
\end{definition}
When clear from context, we omit $Q$ for simplicity, e.g., we will use $\sigma_S$ instead of $\sigma_S(Q)$.

We can view the randomized competitive ratio of a network as the {\em value} of the following zero-sum game $\Gamma(Q,O)$.
A strategy $S$ for the Searcher is a search strategy as described above and a strategy $H$ for the Hider is a point on $Q$. The payoff of the game is the normalized search time $\hat{T}(S,H)$. For randomized strategies $s$ and $h$ of the Searcher and Hider, respectively, the expected payoff is denoted by $\hat{T}(s,h)$.

In~\cite{AL:expanding} the authors considered a similar zero-sum game on finite networks in which the players' strategy sets are the same but the payoff is the unnormalized search time $T(S,H)$. They showed that the strategy sets are compact with respect to the uniform Hausdorff metric and that $T(S,H)$ is lower semicontinuous in $S$ for fixed $H$. Since $d(H)$ is a constant for fixed $H$, it follows that $\hat{T}(S,H) = T(S,H)/d(H)$ is also lower semicontinuous in $S$ for fixed $H$, and by the Minimax Theorem~\cite{AG:minimax}, we have the following theorem.

\begin{theorem}
	Let $Q$ be a finite network with root $O$. The game $\Gamma(Q,O)$ has a value, which is equal to the randomized competitive ratio $\rho(Q)$. The Searcher has an optimal mixed strategy (with competitive ratio $\rho(Q)$) and the Hider has $\varepsilon$-optimal mixed strategies.
	\label{thm:existence}
\end{theorem}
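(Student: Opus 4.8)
The plan is to recognize $\Gamma(Q,O)$ as a semicontinuous zero-sum game and apply the general minimax theorem of~\cite{AG:minimax}. The Searcher's pure strategy space is the set $\mathcal{S}(Q)$ of all expanding searches of $Q$ in the sense of Definition~\ref{def:cont-exp}, and the Hider's pure strategy space is $Q$ itself, which is a compact metric space under $d$ since $Q$ is finite. The two facts I would borrow from~\cite{AL:expanding} are: $\mathcal{S}(Q)$ is compact in the uniform Hausdorff metric, and for each fixed $H$ the unnormalized search time $T(\cdot,H)\colon\mathcal{S}(Q)\to\mR$ is lower semicontinuous.

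Next I would pass to the normalized payoff. For $H\neq O$, the map $S\mapsto\hat T(S,H)=T(S,H)/d(H)$ is a positive scalar multiple of a lower semicontinuous function and is therefore lower semicontinuous, while $\hat T(\cdot,O)\equiv 0$ is trivially so; moreover $\hat T(S,H)\geq 1$ for all $H\neq O$, since an expanding search that has reached $H$ has covered a connected set containing a path from $O$ to $H$, of measure at least $d(H)$. Hence the payoff of $\Gamma(Q,O)$ is lower semicontinuous in the Searcher's strategy for each fixed Hider strategy, is bounded below, and the Searcher's strategy space is compact; these are precisely the hypotheses under which the minimax theorem of~\cite{AG:minimax} guarantees (i) that the game has a value $v$, (ii) that the minimizing player on the compact, lower-semicontinuous side --- the Searcher --- has an optimal mixed strategy, and (iii) that the Hider has $\varepsilon$-optimal mixed strategies for every $\varepsilon>0$. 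Conclusion (ii) can also be seen directly once the value exists: $s\mapsto\sup_H\hat T(s,H)$ is a supremum of lower semicontinuous functions of the mixed strategy $s$, hence lower semicontinuous, and so attains its infimum on the (weak$^*$) compact set of mixed Searcher strategies.

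It remains to identify $v$ with $\rho(Q)$. Since the expected payoff is affine in the Hider's randomization, $\sup_h\hat T(s,h)=\sup_{H\in Q}\hat T(s,H)=\rho_s(Q)$ for every randomized Searcher strategy $s$, whence $v=\inf_s\sup_h\hat T(s,h)=\inf_s\rho_s(Q)=\rho(Q)$ by Definition~\ref{def:rand.search.cont}.

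The one point that needs care is that $\hat T(S,H)$ is \emph{unbounded}: for any fixed $S$ it tends to $\infty$ as $H\to O$ along an arc that $S$ does not explore first, exactly as in the example following Definition~\ref{def:det.search.cont}. This is why the Hider can only be guaranteed $\varepsilon$-optimal (and not optimal) strategies, and it is why one must invoke a form of the minimax theorem --- such as the one in~\cite{AG:minimax} --- that tolerates an unbounded, one-sided-semicontinuous payoff rather than a classical bounded bilinear one. If one also wants $v<\infty$ recorded at this stage, it follows from the existence of a single Searcher strategy of finite competitive ratio, e.g.\ the waterfilling strategy analyzed in Section~\ref{sec:deterministic}.
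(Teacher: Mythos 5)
Your proposal is correct and follows essentially the same route as the paper: it invokes the compactness of the strategy space and the lower semicontinuity of $T(\cdot,H)$ established in~\cite{AL:expanding}, observes that dividing by the constant $d(H)$ preserves lower semicontinuity of the normalized payoff, and then applies the minimax theorem of~\cite{AG:minimax}. The additional remarks you supply --- identifying the value with $\rho(Q)$ via affinity in the Hider's mixture, and explaining why the unboundedness of $\hat T$ near $O$ forces the Hider to settle for $\varepsilon$-optimal strategies --- are correct elaborations that the paper leaves implicit.
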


It is not so straightforward to show that the game has a value if $Q$ is unbounded. Nonetheless, this is not important for our analysis, and we will rely on the following general result for zero-sum games that for any mixed Hider strategy $h$,
\begin{align}
\rho(Q) \ge \sup_{S} \hat{T}(S,h), \label{eq:lb}
\end{align}
where the supremum is taken over all search strategies $S$.

\subsection{Pathwise search}
\label{subsec:prelim.pathwise}

For pathwise search, which is the usual search paradigm, the Searcher follows a continuous, unit-speed path: that is a trajectory $S:[0,\infty)\rightarrow Q$ with $S(0)=O$ and $d(S(t_1),S(t_2)) \le t_2-t_1$ for all $t_1 < t_2$. For such a pathwise search $S$ and a point $H$ on $Q$, the (pathwise) search time $T(S,H)$ of $H$ under $S$ is the first time that $H$ is reached by the Searcher, i.e., $\min \{t \ge 0: S(t)=H \}$. The concepts of deterministic and randomized search times, as well as the deterministic and randomized competitive ratios are defined analogously to Definitions~\ref{def:det.search.cont} and~\ref{def:rand.search.cont}. 

As in the case of expanding search, we may view the randomized competitive ratio of a network as the value of a game played between a minimizing Searcher and a maximizing Hider where the payoff is the search time. In the case of finite networks, it is easy to show that the value exists, 
whereas for unbounded networks, it is again the inequality~(\ref{eq:lb}) which will be most essential in our analysis.

%\notes{I think we need more about the existence of equilibria here, but I am not sure about the references. Also, we need to decide, in terms of presentation, whether we assume networks that can be unbounded, otherwise some concepts will not work, such as $r_{\max}$ in the next section.}

\section{Deterministic, expanding competitive ratio}
\label{sec:deterministic}

In this section we show how to obtain an expanding search of optimal deterministic ratio, using a ``water filling'' principle.
Informally, the network is searched in such a way that the set of points that have been searched at any given time form an expanding disc around $O$.
Recall the definition of $f_Q$ from Section~\ref{sec:preliminaries}. $f_Q$ is piecewise linear and strictly increasing so has an inverse $g_Q$. The interpretation is that $g_Q(t)$ is the unique radius $r$ for which $Q[r]$ has measure $t$.

\begin{definition}
	For a network $Q$ with root $O$, consider the expanding search $S^*$ defined by $S^*(t)=Q[g_Q(t)]$ for $0 \le t \le r_{\max}$.
\end{definition}
Thus, $S^*(t)$ is an expanding disc of radius $g_Q(t)$. It is easy to verify that $S^*$ is indeed
an expanding search. First, we note that $S^*(t)$ is connected, since $Q[r]$ is always connected.
It also trivially satisfies (i) and (ii) from Definition \ref{def:cont-exp}, and (iii) is also satisfied since
\[
\lambda (S^*(t)) = \lambda (Q[g_Q(t)]) = f_Q(g_Q(t)) =t.
\]
We will show that $S^*$ attains the optimal competitive ratio. First, note that the search time of a point $H\in Q$ under $S^*$ is the unique time $t$ such that $S^*(t)=Q[d(H)]$, so $T(S^*,H)=\lambda (Q[d(H)]) =f_Q(d(H)) $. Hence, the competitive ratio of $S^*$ is
\begin{align}
\sigma _{S^*} &=\sup_{H\in Q-\{O\}}\frac{f_Q(d(H))}{d(H)} \nonumber \\
&=\sup_{r > 0}\frac{f_Q(r)}{r} \label{eq:sigmaS*}.
%&=\frac{1}{\inf_{t >0}\frac{g (t)}{t}}.
\end{align}
%\label{eq:continuous}
This has an intuitive interpretation as follows: if we draw the graph of $f_Q(r)$ then the competitive ratio is the slope of the steepest straight line through the root that intersects with the graph of $f_Q(r)$. Condition~(\ref{eq:cond}) ensures that $\sigma_{S^*}$ is finite for unbounded networks, since $f_Q(r) \le Mr$ for all $r$.

\begin{theorem} \footnote{This theorem appeared without proof as Theorem~6 of \cite{stacs-expanding}.}
	\label{thm:cont-determ}
    The expanding search $S^*$ is optimal and the competitive ratio $\sigma$ of a network $Q$ with root $O$ is given by
	\begin{equation}
	\sigma = \sup_{r > 0} \frac{f_Q(r)}{r}.
	\label{eq:cont-sigma}
	\end{equation}
\end{theorem}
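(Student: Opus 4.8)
The plan is to deduce both assertions of the theorem from a single matching lower bound: \emph{for every expanding search $S$ of $Q$ we have $\sigma_S \ge \sup_{r>0} f_Q(r)/r$}. Indeed, in~(\ref{eq:sigmaS*}) we already computed $\sigma_{S^*} = \sup_{r>0} f_Q(r)/r$, and we checked that $S^*$ is a legitimate expanding search; combining this with the lower bound yields $\sigma = \inf_S \sigma_S = \sigma_{S^*} = \sup_{r>0} f_Q(r)/r$, so that $S^*$ is optimal and~(\ref{eq:cont-sigma}) holds. Thus the entire content of the proof is the lower bound.

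To prove it, I would fix an arbitrary expanding search $S$ and an arbitrary radius $r$ with $0 < r \le r_{\max}$, and show $\sigma_S \ge f_Q(r)/r$; taking the supremum over $r$ then completes the argument. The key step is a lower bound on the time required to cover the disc $Q[r]$. Let $\tau = \sup\{T(S,H): H \in Q[r]\}$. If $\tau = \infty$ there is nothing to prove (then $\sigma_S = \infty$ already), so assume $\tau$ is finite. Since the search time of every point is attained, i.e.\ $H \in S(T(S,H))$, monotonicity of the family $S(\cdot)$ (property (ii) of Definition~\ref{def:cont-exp}) gives $H \in S(\tau)$ for every $H \in Q[r]$, hence $Q[r] \subseteq S(\tau)$. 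Then by property (iii) of Definition~\ref{def:cont-exp} and the definition of $f_Q$,
\[
\tau = \lambda(S(\tau)) \ge \lambda(Q[r]) = f_Q(r).
\]

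It remains to turn this into a bound on the normalized search time. Every point $H \in Q[r]\setminus\{O\}$ has $d(H) \le r$, so $\hat{T}(S,H) = T(S,H)/d(H) \ge T(S,H)/r$; taking the supremum over such $H$ and using the previous display,
\[
\sigma_S \ge \sup_{H \in Q[r]\setminus\{O\}} \hat{T}(S,H) \ge \frac{1}{r}\sup_{H \in Q[r]\setminus\{O\}} T(S,H) = \frac{\tau}{r} \ge \frac{f_Q(r)}{r},
\]
where the middle equality uses $T(S,O)=0$, so that omitting $O$ does not change the supremum (and $Q[r]$ contains points other than $O$ for every $r>0$ once $Q$ is not the trivial one-point network). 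As $r \in (0,r_{\max}]$ was arbitrary, $\sigma_S \ge \sup_{r>0} f_Q(r)/r$, as needed.

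This is essentially a measure-counting argument, so I do not anticipate a genuine obstacle; the only points needing care are the bookkeeping ones already present in the setup — that search times are attained (so that $Q[r]\subseteq S(\tau)$ holds on the nose, not merely up to a null set), the harmless case $\tau=\infty$, and the fact that for unbounded networks the quantity $\sup_{r>0} f_Q(r)/r$ is a genuine supremum that is nonetheless finite, which is exactly what condition~(\ref{eq:cond}) guarantees since it forces $f_Q(r)\le Mr$.
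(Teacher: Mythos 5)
Your proposal is correct and follows essentially the same route as the paper: both arguments lower-bound the time any expanding search needs to cover the disc $Q[r]$ by its measure $f_Q(r)$ (the paper via $t(r)=\min\{t>0: Q[r]\subset S(t)\}\ge f_Q(r)$, you via $\tau=\sup_{H\in Q[r]}T(S,H)\ge\lambda(S(\tau))\ge f_Q(r)$), and then divide by $r$ and compare with~(\ref{eq:sigmaS*}). Your version is simply a more careful write-up, handling the $\tau=\infty$ case and avoiding the paper's slightly loose opening ``let $S$ be an optimal search'' by working with an arbitrary $S$.
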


\begin{proof}
	let $S$ be an optimal search, and let $t(r) = \min \{t>0: Q[r] \subset S(t) \}$ be the first time that $S$ contains $Q[r]$. Then the maximum search time of any point $H$ at some fixed distance $r$ from $O$ is $t(r)$, and it follows that $\sigma=\sigma_S$ is given by
	\[
	\sigma_S = \sup_{r>0} \frac{t(r)}{r}.
	\]
Clearly, $ t(r) \ge f_Q(r)$, so $\sigma_{S^*} \le \sigma_S$, by~(\ref{eq:sigmaS*}). The optimality of $S^*$ and the expression for $\sigma$ follows.
\end{proof}

\section{Randomized, expanding competitive ratio}
\label{sec:random}

In this section we study the randomized competitive ratio of expanding search, which is significantly more 
challenging to analyze than the deterministic one. 
We begin by showing that the randomized competitive ratio is at most
half the deterministic competitive ratio and that there exist networks for which this bound is tight (Section~\ref{subsec:randomized.half}). 
In Section~\ref{subsec:randomized.better}
we give a Hider strategy that allows us to get useful lower bounds on the randomized competitive ratio. We also obtain
bounds on $\rho$ that are parameterized by the function $f_Q$, from which 
we can deduce the randomized competitive ratio for networks with concave $f_Q$. In Section~\ref{subsec:randomized.y}
we show that the randomized strategy may have a quite complex statement, even for very simple networks that
consist only of three arcs. We address this difficulty in Section~\ref{subsec:randomized.5/4}, in which we 
give a strategy that is within a factor at most $5/4$ of the optimal randomized competitive ratio, for all networks.

\subsection{A simple approximation of the randomized competitive ratio}
\label{subsec:randomized.half}

Recall that $S^*$ denotes the optimal deterministic search strategy of Section~\ref{sec:deterministic}.
\begin{proposition} \footnote{This proposition appeared without proof as Proposition 7 of~\cite{stacs-expanding}.}
	\label{prop:cont-random}
	For a network $Q$ with root $O$, the randomized competitive ratio $\rho$ satisfies
	\[
	\sigma/2 \le \rho \le \sigma.
	\]
	Furthermore, the bounds are tight, in the sense that they are the best possible.
\end{proposition}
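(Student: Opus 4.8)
The plan is to establish the three parts separately: the upper bound $\rho \le \sigma$, the lower bound $\rho \ge \sigma/2$, and then the tightness of each. The upper bound is immediate: any deterministic search strategy is a (degenerate) randomized strategy, so $\rho \le \rho_{S^*} = \sigma_{S^*} = \sigma$ by Theorem~\ref{thm:cont-determ}. The real content is the lower bound $\rho \ge \sigma / 2$. Here I would exhibit a single Hider distribution $h$ and invoke inequality~(\ref{eq:lb}), which says $\rho \ge \sup_S \hat T(S,h)$ for any mixed Hider strategy $h$. The natural choice, by analogy with the classical lower-bound arguments for linear and star search, is to make the Hider hide at the point achieving (or nearly achieving) the supremum in $\sigma = \sup_{r>0} f_Q(r)/r$, but randomized along the ``critical radius'' so that no Searcher strategy can reach it quickly relative to its distance.

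More concretely: fix $r^*$ (or a sequence $r_n \to r^*$ if the sup is not attained) with $f_Q(r^*)/r^* = \sigma$ (or $\ge \sigma - \varepsilon$). Consider the Hider distribution $h$ that is uniform, with respect to Lebesgue measure $\lambda$, on the disc $Q[r^*]$, which has total measure $f_Q(r^*)$. Now fix an arbitrary expanding search $S$. For a point $H$ at distance $d(H) = r$ from $O$, we have $T(S,H) \ge \lambda(S(T(S,H)) \cap Q[r]) $ — but more usefully, since $S(t)$ is connected and contains $O$, the amount of measure inside $Q[r^*]$ that $S$ has covered by time $t$ is at most $t$. The key estimate is that the expected search time $T(S,h) = \int_{Q[r^*]} T(S,H)\,d\lambda(H) / f_Q(r^*)$ is at least the ``average'' time to cover $Q[r^*]$, which is at least $f_Q(r^*)/2$ by a standard argument: if a set of measure $\mu^* = f_Q(r^*)$ is covered by an increasing family with $\lambda(S(t)) = t$, then $\int_0^{\mu^*} \mathbf{1}[H \notin S(t)]\,dt$ integrated over $H$ gives at least $\mu^{*2}/2$, so the average cover time is $\ge \mu^*/2$. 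Dividing by $d(H) \le r^*$ inside the expectation requires a little care — I would instead weight the Hider distribution by $1/d(H)$, i.e.\ use the density proportional to $1/d(H)$ on $Q[r^*]$, so that $\hat T(S,h) = E[T(S,H)/d(H)]$ is controlled directly; the bound then becomes $\hat T(S,h) \ge \frac{1}{2}\sup_r f_Q(r)/r = \sigma/2$ after the appropriate normalization. This reweighting (or, alternatively, a layer-cake decomposition over radii) is the main technical obstacle: getting the constant exactly $1/2$ rather than something weaker demands that the Hider distribution be chosen to match the linear growth of the ``worst'' cover profile, and one must check that the infimum over Searcher strategies of the relevant integral is genuinely $\mu^*/2$ and not larger.

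For tightness of the upper bound $\rho \le \sigma$, I would exhibit a network where $\rho = \sigma$: the single arc (an interval $[0,1]$ with root at $0$) works, since there is essentially one search strategy up to the trivial choice and $f_Q(r) = r$ gives $\sigma = 1$, while any strategy has $\hat T(S,H) \to 1$ as $H \to$ the far endpoint — actually here $\sigma = 1$ and $\rho = 1$ trivially, so a cleaner witness is a network where the deterministic ratio is achieved only ``at infinity'' so randomization cannot help; the $m$-ray star with $m \to \infty$, or a carefully chosen small network, should do. For tightness of the lower bound $\rho = \sigma/2$, the natural candidate is a star of many short arcs (or the limit as $m \to \infty$ of the $m$-star restricted to bounded length), where Randomized Depth-First-Search visits the arcs in uniformly random order: a Hider on an arc at position $x$ is found after, in expectation, roughly half the arcs plus its own, giving expected normalized time close to $\sigma/2$. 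I would verify that $\sigma = m$ for the unit $m$-star (since $f_Q(r) = mr$ for $r \le 1$) and that the random-order strategy achieves expected normalized search time exactly $(m+1)/2 \to \sigma/2$ as $m \to \infty$, matching the lower bound; combined with the general bound this shows $\rho \to \sigma/2$, so the constant $1/2$ cannot be improved.
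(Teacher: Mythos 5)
Your upper bound and the core of your lower bound are sound, and the lower bound is in fact simpler than you fear. The ``little care'' you worry about when dividing by $d(H)$ is not needed: for the uniform Hider $h$ on $Q[r^*]$, every point satisfies $d(x)\le r^*$, so $T(S,x)/d(x)\ge T(S,x)/r^*$ pointwise, and hence $\hat{T}(S,h)\ge T(S,h)/r^*\ge \bigl(f_Q(r^*)/2\bigr)/r^*\ge(\sigma-\varepsilon)/2$ directly. The inequality goes the right way precisely because the payoff divides by something \emph{at most} $r^*$. This is exactly the paper's argument. Your proposed fix of reweighting by $1/d(H)$ is both unnecessary and in the wrong direction: to cancel the $1/d(x)$ in the payoff one would weight the density \emph{proportionally} to $d(x)$, which is what Theorem~\ref{thm:lbound} does; a $1/d(x)$ weighting leaves a $1/d(x)^2$ in the integrand and simplifies to nothing useful.

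The genuine gap is in your tightness witness for the lower bound. The unit $m$-star does \emph{not} satisfy $\rho\approx\sigma/2$. First, the strategy you describe (visit the $m$ unit arcs in uniformly random order) has \emph{infinite} competitive ratio: a Hider at distance $\varepsilon$ from the root on some arc has expected search time about $(m-1)/2+\varepsilon$, so its normalized search time blows up as $\varepsilon\to 0$. Second, and more decisively, no strategy can do better: on the unit $m$-star $f_Q(r)=mr$ is concave, so $\rho=\sigma=\deg_Q(O)=m$ (this is Lemma~\ref{lemma:randomized.degree} and Corollary~\ref{cor:concave}; it follows from Theorem~\ref{thm:lbound} applied to a small ball around the root). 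So the $m$-star is actually a witness that the \emph{upper} bound $\rho\le\sigma$ is tight, the opposite of what you want. To make the lower bound tight you must neutralize the near-root Hiders, which is why the paper's example attaches a long stem of length $n$ to the root and hangs the bundle of $n^2$ unit arcs at its far end: searching the stem first guarantees normalized time $1$ for all points at distance at most $n$, and for the far points (all at distance $d>n$) the random-order search gives $\hat{T}(s,H)=1+(n^2-1)/(2d)\le 1+n/2$, while $\sigma=(n+n^2)/(n+1)=n$, so $\sigma/\rho\to 2$. Your single-arc witness for the upper bound is fine; the subsequent second-guessing (and the suggestion of the $m$-star there) is unneeded.
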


\begin{proof}
The right-hand inequality is clear, since every deterministic search strategy
is also a randomized search strategy.
To prove the left-hand inequality, we first observe that since $S^*$ is an optimal deterministic search, for any $\varepsilon>0$, we can find some point $H$ on $Q$ such that $\hat{T}(S^*,H) \ge \sigma - \varepsilon$. Let $r = d(H)$ so that $\sigma \le f_Q(r)/r + \varepsilon $. Let $h$ be the Hider strategy that hides on $Q[r]$ uniformly: that is, it chooses a subset of $Q[r]$ with probability proportional to the measure of that subset. For any search strategy $S$, the expected search time $T(S,h)$ is at least $\lambda(Q[r])/2$, so
\begin{align*}
\rho &\ge \sup_S \hat{T}(S,h) \\
&\ge \frac{\lambda(Q[r])/2}{r} \mbox{ (since every point in $Q[r]$ is at distance no more than $r$ from the root)} \\
&=  \frac{f_Q(r)}{2r} \\
&\ge \frac{\sigma - \varepsilon}{2}.
\end{align*}
Since $\varepsilon$ can be arbitrarily small, it follows that $\rho \ge \sigma/2$. 

We will now argue that both bounds are tight.
This is trivially true for the right-hand inequality since the network consisting of one arc with the root at its end has the same deterministic and randomized competitive ratio.

For the left-hand inequality, consider the network depicted in Figure \ref{fig:cont-det}. The normalized search time $\hat{T}(S^*,H)$ is maximized at leaf nodes $X$, so that $\sigma = \hat{T}(S^*,X) = (n+n^2)/(n+1) = n$.

Consider now the randomized strategy $s$ that searches the arc of length $n$ first before searching the remainder of the arcs in a uniformly random order. Then all points $H$ at distance no greater than $n$ have expected normalized search time $1$; a point $H$ at distance $d > n$ has
\[
\hat{T}(s,H) = \frac{d + (n^2-1)/2}{d} \le 1+ \frac{(n^2-1)/2}{n} \le 1 + n/2,
\]
so $\rho \le 1 + n/2 =1+\sigma/2$. Since $\rho \ge \sigma/2$, we must have that $\sigma /\rho \rightarrow 2$, as $n \rightarrow \infty$.
\begin{figure}[htb!]
	\centerline{\includegraphics[scale=0.4]{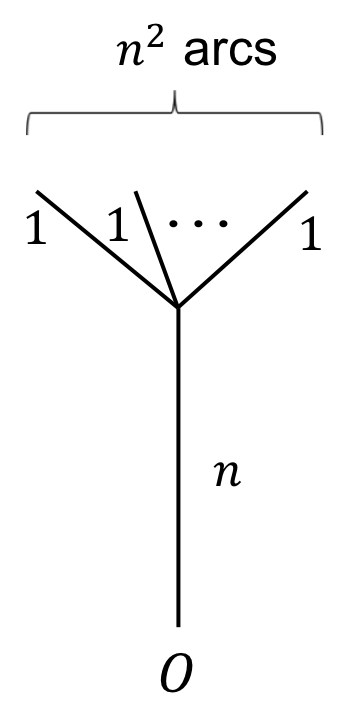}}
	\caption{A network for which $\rho \approx \sigma/2$.}
	\label{fig:cont-det}
\end{figure}
\end{proof}

A corollary of Proposition~\ref{prop:cont-random} is  that the ``water-filling'' search $S^*$ approximates the optimal randomized search by a factor of $2$.

\subsection{A Hider strategy, and lower bounds on the randomized competitive ratio}
\label{subsec:randomized.better}

For a general network $Q$, let $A$ be a connected subset. Let $d(A)$ be the distance from $O$ to $A$ and let $u_A$ be the Hider strategy (probability measure) that hides uniformly on $A$, so that $u_A(X) = \lambda(X)/\lambda(A)$ for a measurable subset $X \subset A$. Then denote the average distance from $O$ to points in $A$ by $\overline{d}(A)=\int_{x \in A}d(x) du_A(x)$.
\begin{theorem}\label{thm:lbound}
		Consider the Hider strategy $h_A$ given by 
		\[
		dh_A(x) = \frac{d(x)}{\overline{d}(A)} du_A(x).
		\]
		By adopting the strategy $h_A$, the Hider ensures that the randomized competitive ratio $\rho$ satisfies
	\[
	\rho \ge \frac{d(A)+\lambda(A)/2}{\overline{d}(A)}.
	\]
\end{theorem}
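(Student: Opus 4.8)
The plan is to invoke the lower-bound inequality~(\ref{eq:lb}) and reduce the claim to a single uniform estimate, valid for every expanding search $S$, on the total \emph{unnormalized} search time that $S$ spends on the set $A$. First I would check that $h_A$ is a genuine probability measure: integrating, $\int_A dh_A(x)=\frac{1}{\overline{d}(A)}\int_A d(x)\,du_A(x)=\overline{d}(A)/\overline{d}(A)=1$, by the definition of $\overline{d}(A)$. The point of the particular weighting used in $h_A$ is that it makes the factor $d(x)$ in the normalized search time cancel. Writing $du_A(x)=d\lambda(x)/\lambda(A)$ on $A$, we get
\[
\hat T(S,h_A)=\int_A \frac{T(S,x)}{d(x)}\,dh_A(x)=\int_A \frac{T(S,x)}{d(x)}\cdot\frac{d(x)}{\overline{d}(A)}\cdot\frac{d\lambda(x)}{\lambda(A)}=\frac{1}{\overline{d}(A)\,\lambda(A)}\int_A T(S,x)\,d\lambda(x).
\]
Hence it suffices to prove that $\int_A T(S,x)\,d\lambda(x)\ge \lambda(A)\bigl(d(A)+\lambda(A)/2\bigr)$ for every search strategy $S$, since then $\rho\ge \hat T(S,h_A)\ge \bigl(d(A)+\lambda(A)/2\bigr)/\overline{d}(A)$ by~(\ref{eq:lb}).

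For the estimate, let $\tau(t)=\lambda(A\cap S(t))$ be the measure of $A$ explored by time $t$. Since $T(S,x)\le t$ iff $x\in S(t)$ (using monotonicity of $S$ and that the minimum defining $T$ is attained), we have $\{x\in A:T(S,x)\le t\}=A\cap S(t)$, so the layer-cake formula gives $\int_A T(S,x)\,d\lambda(x)=\int_0^\infty\bigl(\lambda(A)-\tau(t)\bigr)\,dt$. Two elementary facts about $\tau$ control this integral. First, $T(S,x)\ge d(x)\ge d(A)$ for every $x\in A$, because $S(T(S,x))$ is a connected subset of measure $T(S,x)$ containing both $O$ and $x$ and therefore has measure at least $d(x)$; consequently $\tau(t)=0$ for all $t\le d(A)$. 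Second, $\tau$ is nondecreasing and $1$-Lipschitz, since for $t'<t$ we have $\tau(t)-\tau(t')\le\lambda\bigl(S(t)\setminus S(t')\bigr)=t-t'$; together with $\tau(d(A))=0$ this yields $\tau(t)\le t-d(A)$ for $t\ge d(A)$. Of course $\tau(t)\le\lambda(A)$ always.

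Combining these, $\lambda(A)-\tau(t)=\lambda(A)$ on $[0,d(A)]$ and $\lambda(A)-\tau(t)\ge\lambda(A)-(t-d(A))$ on $[d(A),\,d(A)+\lambda(A)]$, so
\[
\int_0^\infty\bigl(\lambda(A)-\tau(t)\bigr)\,dt\ \ge\ \lambda(A)\,d(A)+\int_0^{\lambda(A)}\bigl(\lambda(A)-u\bigr)\,du\ =\ \lambda(A)\,d(A)+\frac{\lambda(A)^2}{2},
\]
which is exactly the required bound; substituting back into the expression for $\hat T(S,h_A)$ and invoking~(\ref{eq:lb}) completes the argument. I do not expect a real obstacle here: the only slightly delicate point is the inequality $T(S,x)\ge d(x)$ — equivalently, that a connected explored region containing $O$ and $x$ has length at least $d(x)$ — which is where the connectedness of the sets $S(t)$ from Definition~\ref{def:cont-exp} is used (and where, if desired, condition~(\ref{eq:cond}) can be used to see that the threshold set at distance exactly $d(A)$ is negligible); everything else is the layer-cake identity together with the monotonicity and $1$-Lipschitz properties of $\tau$.
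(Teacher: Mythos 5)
Your proposal is correct and follows essentially the same route as the paper: the weighting by $d(x)$ cancels the denominator of the normalized search time, reducing the claim to the estimate $T(S,u_A)\ge d(A)+\lambda(A)/2$ for the uniform measure on $A$, which is then combined with the general lower-bound inequality for mixed Hider strategies. The only difference is that the paper simply asserts this estimate with a ``note that,'' whereas you supply a full justification via the layer-cake identity and the $1$-Lipschitz property of $t\mapsto\lambda(A\cap S(t))$; that added detail is correct and strengthens rather than changes the argument.
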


\begin{proof}
Let $S$ be any search strategy, and note that $T(S,u_A)\ge d(A) + \lambda(A)/2$. We have
	\begin{align*}
	\rho \ge \hat{T}(S,h_A) & = \int_{x \in A} \frac{T(S,x)}{d(x)} dh_A(x) \\
	& = \frac{1}{\overline{d}(A)} \int_{x \in A} T(S,x) du_A(x) \\
	& = \frac{1}{\overline{d}(A)} T(S,u_A)\\
	&\ge \frac{d(A)+\lambda(A)/2}{\overline{d}(A)}.
	\end{align*}
\end{proof}
To illustrate the applicability of Theorem~\ref{thm:lbound}, we show how to obtain, in a different way, the corollary of Proposition~\ref{prop:cont-random} that the optimal deterministic search strategy approximates the optimal randomized strategy by a factor of $2$.
\begin{corollary}
	The optimal deterministic search $S^*$ approximates the randomized competitive ratio by a factor of~$2$. 
\end{corollary}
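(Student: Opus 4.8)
The plan is to invoke Theorem~\ref{thm:lbound} for a well-chosen connected subset $A$, namely a closed disc around the root. Since Theorem~\ref{thm:cont-determ} gives $\sigma_{S^*}=\sigma=\sup_{r>0}f_Q(r)/r$, it suffices to reprove $\rho\ge\sigma/2$, and then the claimed factor-$2$ approximation follows immediately from $\sigma_{S^*}=\sigma\le 2\rho$.

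First I would fix $\varepsilon>0$ and, using the definition of the supremum, pick $r>0$ with $f_Q(r)/r\ge\sigma-\varepsilon$. Then I would take $A=Q[r]$. This set is connected (as already observed in Section~\ref{sec:deterministic}, since $Q[\cdot]$ is always connected), and it has measure $\lambda(A)=f_Q(r)$. The two key observations are: $d(A)=0$, because the root $O$ lies in $A$; and $\overline{d}(A)\le r$, because every point of $Q[r]$ is at distance at most $r$ from $O$, so the average distance is at most $r$.

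Substituting these facts into the lower bound of Theorem~\ref{thm:lbound} applied to the Hider strategy $h_A$ gives
\[
\rho\ \ge\ \frac{d(A)+\lambda(A)/2}{\overline{d}(A)}\ =\ \frac{f_Q(r)/2}{\overline{d}(A)}\ \ge\ \frac{f_Q(r)}{2r}\ \ge\ \frac{\sigma-\varepsilon}{2}.
\]
Letting $\varepsilon\to0$ yields $\rho\ge\sigma/2$, hence $\sigma_{S^*}=\sigma\le 2\rho$, as required.

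There is essentially no obstacle beyond identifying the right subset $A$: the configuration that saturates the deterministic competitive ratio is precisely a disc, and for a disc the numerator of the bound in Theorem~\ref{thm:lbound} loses its $d(A)$ contribution (it is zero) while the denominator is controlled by the radius $r$. The only points requiring a moment's care are the use of the $\varepsilon$-argument to handle the supremum defining $\sigma$ (which need not be attained), and the verification that $Q[r]$ is a legitimate connected subset of positive measure with $\overline{d}(A)>0$ so that Theorem~\ref{thm:lbound} applies; both are routine.
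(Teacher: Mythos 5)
Your proposal is correct and follows essentially the same route as the paper: both apply Theorem~\ref{thm:lbound} to a disc $A=Q[r]$ whose radius nearly attains the supremum defining $\sigma$, use $d(A)=0$ and $\overline{d}(A)\le r$, and conclude $\rho\ge\lambda(A)/(2\overline{d}(A))\ge f_Q(r)/(2r)\ge(\sigma-\varepsilon)/2$. The only difference is cosmetic (the paper bounds the ratio $\sigma/\rho\le 2+\varepsilon$ directly rather than letting $\varepsilon\to 0$ in $\rho\ge(\sigma-\varepsilon)/2$).
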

\begin{proof}
	Let $\varepsilon >0$ and let $x$ be a point of $Q$ such that $\sigma \le \hat{T}(S^*,x)+\varepsilon/2 =\lambda(A)/d(x) + \varepsilon/2$, where $A \equiv Q[d(x)]$ is the set of all points at distance at most $d(x)$. By Theorem~\ref{thm:lbound}, $\rho \ge \lambda(A)/(2\overline{d}(A))$, so
	\[
	\frac{\sigma}{\rho} \le \frac{\lambda(A)/d(x) + \varepsilon/2}{\lambda(A)/(2\overline{d}(A))} = \frac{2\overline{d}(A)}{d(x)} + \frac{ \varepsilon \overline{d}(A)}{\lambda(A)} \le 2 + \varepsilon.
	\]
Since $\varepsilon$ can be arbitrarily small, the corollary follows.
\end{proof}

More importantly, Theorem~\ref{thm:lbound} allows us to obtain the following lower bound on the randomized competitive ratio of $Q$.
\begin{lemma}
For any network $Q$ with root $O$, it holds that $\rho\geq \deg_Q(O)$.
\label{lemma:randomized.degree}
\end{lemma}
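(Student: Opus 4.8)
The plan is to apply Theorem~\ref{thm:lbound} to a carefully chosen connected subset $A$ of $Q$ sitting near the root. Let $k = \deg_Q(O)$ and let $a_1, \dots, a_k$ be the arcs incident to $O$. For a small parameter $\varepsilon > 0$, let $A_\varepsilon$ be the union of the initial segments of length $\varepsilon$ of each of these $k$ arcs (the ``star of radius $\varepsilon$'' around $O$); this is connected, has $d(A_\varepsilon) = 0$, and has $\lambda(A_\varepsilon) = k\varepsilon$ provided $\varepsilon$ is small enough that these segments are pairwise disjoint except at $O$ and do not reach any other vertex. Theorem~\ref{thm:lbound} then gives
\[
\rho \ge \frac{d(A_\varepsilon) + \lambda(A_\varepsilon)/2}{\overline{d}(A_\varepsilon)} = \frac{k\varepsilon/2}{\overline{d}(A_\varepsilon)}.
\]

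The key step is to compute $\overline{d}(A_\varepsilon)$. On the star of radius $\varepsilon$, every point at arc-distance $t \in [0,\varepsilon]$ from $O$ along any of the $k$ arcs satisfies $d(x) = t$ (the shortest path to $O$ is along that arc itself, since $t \le \varepsilon$ is small). Hence $\overline{d}(A_\varepsilon) = \frac{1}{k\varepsilon}\cdot k \int_0^\varepsilon t\, dt = \varepsilon/2$. Substituting, we obtain $\rho \ge (k\varepsilon/2)/(\varepsilon/2) = k = \deg_Q(O)$, as desired. Note the $\varepsilon$'s cancel, so in fact one does not even need to take a limit — any sufficiently small $\varepsilon$ works — though phrasing it as $\varepsilon \to 0$ is harmless.

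The main obstacle is really just the bookkeeping needed to ensure $A_\varepsilon$ is legitimately a disjoint star for small $\varepsilon$: one must check that for $\varepsilon$ smaller than half the length of the shortest arc incident to $O$ and smaller than half the girth through $O$ (so no two of the segments meet except at $O$, and no shortcut exists), the set $A_\varepsilon$ is connected and the distance function restricted to it is simply arc-length from $O$. Once that is set up, everything is a one-line computation via Theorem~\ref{thm:lbound}. I would present the argument by fixing such an $\varepsilon$, invoking Theorem~\ref{thm:lbound} with $A = A_\varepsilon$, and reading off the bound.
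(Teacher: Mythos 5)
Your proof is correct and takes essentially the same approach as the paper's: the paper chooses $A=Q[r_0]$ with $r_0$ at most the length of the shortest arc incident to $O$, which for such $r_0$ is exactly your star $A_\varepsilon$, computes $\overline{d}(A)=r_0/2$, and applies Theorem~\ref{thm:lbound} to conclude $\rho \ge \lambda(A)/(2\overline{d}(A)) = \deg_Q(O)$. The only cosmetic difference is that the paper evaluates $\overline{d}(A)$ via the tail formula $\int_0^{r_0}\bigl(1-u_A(Q[r])\bigr)\,dr$ rather than by integrating $t$ directly over each segment, and sidesteps your ``no shortcuts'' bookkeeping by defining $A$ as a metric ball from the outset.
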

\begin{proof}
Let $E_O$ denote the set of arcs in $Q$ that are incident with $O$.
Fix $r_0>0$ such that $r_0\leq \min_{e \in E_O} \lambda(e)$; clearly, such an $r_0$ must exist. Let $A=Q[r_0]$ be the ball of points in
$Q$ that are at distance at most $r_0$ from $O$, and let $h_A$ be the Hider strategy associated with $A$, and defined as in the 
statement of Theorem~\ref{thm:lbound}. We calculate the average distance $\overline{d}(A)$ from $O$ to points in $A$ by writing
  \begin{align*}
  \overline{d}(A) &= \int_{0}^{r_{0}} 1- u_A(Q[r]) dr
  = \int_{0}^{r_{0}} 1- \frac{r}{r_{0}}dr 
  = \frac{r_{0}}{2}. 
  \end{align*}
Moreover, from the definition of $A$, we have that $\lambda(A)=\deg_Q(O) \cdot r_0$. By Theorem~\ref{thm:lbound}, we have 
  \[
  \rho \ge \frac{\lambda(A)}{2\overline{d}(A)} = \deg_Q(O). 
  \]
\end{proof}
The above lemma implies a tight bound on the randomized competitive ratio for all networks $Q$ for which the function $f_Q$
is concave, as shown in the following corollary.
\begin{corollary}
For any network $Q$ for which $f_Q$ is concave, we have that $\sigma=\rho=\deg_Q(O)$, and strategy $S^*$ is an optimal randomized strategy. 
\label{cor:concave}
\end{corollary}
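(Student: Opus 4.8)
The plan is to combine the general lower bound $\rho \ge \deg_Q(O)$ from Lemma~\ref{lemma:randomized.degree} with an upper bound $\sigma \le \deg_Q(O)$ that uses the concavity of $f_Q$, and then invoke the already-established sandwich $\sigma/2 \le \rho \le \sigma$ together with Theorem~\ref{thm:cont-determ}. Since $\rho \le \sigma$ always holds and $\rho \ge \deg_Q(O)$ always holds, it suffices to prove $\sigma \le \deg_Q(O)$; then $\deg_Q(O) \le \rho \le \sigma \le \deg_Q(O)$ forces equality throughout, and $S^*$ — which attains $\sigma$ by Theorem~\ref{thm:cont-determ} — is then also an optimal randomized strategy since its competitive ratio equals $\rho$.

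So the real content is: if $f_Q$ is concave then $\sigma = \sup_{r>0} f_Q(r)/r = \deg_Q(O)$. First I would recall that $f_Q$ is piecewise linear, strictly increasing, with $f_Q(0) = 0$. The initial slope of $f_Q$ at $r = 0^+$ is exactly $\deg_Q(O)$: for small $r$, the disc $Q[r]$ consists of an initial segment of length $r$ along each of the $\deg_Q(O)$ arcs incident to $O$, so $f_Q(r) = \deg_Q(O)\cdot r$ on $[0, r_0]$ for $r_0$ as in the proof of Lemma~\ref{lemma:randomized.degree}. Now for a concave function with $f_Q(0)=0$, the quantity $f_Q(r)/r$ is the slope of the secant from the origin to $(r, f_Q(r))$, and concavity with $f_Q(0)=0$ implies this secant slope is nonincreasing in $r$ (this is the standard fact that $r \mapsto f_Q(r)/r$ is nonincreasing when $f_Q$ is concave and $f_Q(0)\le 0$). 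Hence $\sup_{r>0} f_Q(r)/r = \lim_{r \to 0^+} f_Q(r)/r = \deg_Q(O)$, giving $\sigma = \deg_Q(O)$.

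I would then assemble the chain: by Theorem~\ref{thm:cont-determ}, $\sigma = \sup_{r>0} f_Q(r)/r = \deg_Q(O)$; by Lemma~\ref{lemma:randomized.degree}, $\rho \ge \deg_Q(O)$; and by Proposition~\ref{prop:cont-random} (or simply because any deterministic strategy is randomized), $\rho \le \sigma = \deg_Q(O)$. Therefore $\sigma = \rho = \deg_Q(O)$. Finally, $\sigma_{S^*} = \sigma = \rho$, so viewing $S^*$ as a (degenerate) randomized strategy, it attains the randomized competitive ratio and is optimal.

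I do not anticipate a serious obstacle here — the corollary is essentially bookkeeping once Lemma~\ref{lemma:randomized.degree} is in hand. The only point requiring a little care is the justification that concavity of $f_Q$ together with $f_Q(0)=0$ makes $f_Q(r)/r$ nonincreasing, so that the supremum is the limit at $0$ and equals the right derivative $\deg_Q(O)$; one should also note the edge case where $Q$ is a single arc ($\deg_Q(O)=1$), where $f_Q(r)=r$ is (weakly) concave and indeed $\sigma=\rho=1$, so the statement is consistent. It is also worth remarking that $\sup_{r>0}$ is genuinely attained in the limit rather than at an interior point, which is fine since the competitive ratio $\sigma$ is defined as a supremum.
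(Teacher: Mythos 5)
Your proposal is correct and takes essentially the same route as the paper: the lower bound comes from Lemma~\ref{lemma:randomized.degree}, and the upper bound from $\rho\le\sigma=\sup_{r>0}f_Q(r)/r$ via Theorem~\ref{thm:cont-determ}, together with the fact that concavity of $f_Q$ forces this supremum to equal the initial slope $\deg_Q(O)$. You merely spell out the secant-slope monotonicity argument (and the identification of the initial slope with $\deg_Q(O)$) that the paper's proof leaves implicit.
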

\begin{proof}
The lower bound on $\rho$ follows from Lemma~\ref{lemma:randomized.degree}. For the upper bound, 
by~(\ref{eq:cont-sigma}), we have $\rho \le \sigma = \sup_{r>0} f_Q(r)/r$, and for any network for which $f_Q$ is concave, it holds
that $\sup_{r>0} f_Q(r)/r=\deg_Q(O)$. 
\end{proof}
An example of a network for which $f_Q$ is concave is depicted in Figure~\ref{fig:concave}, along with a plot of its
function $f_Q$.
\begin{figure}[htb!]
  \begin{center}
  \includegraphics[scale=0.25]{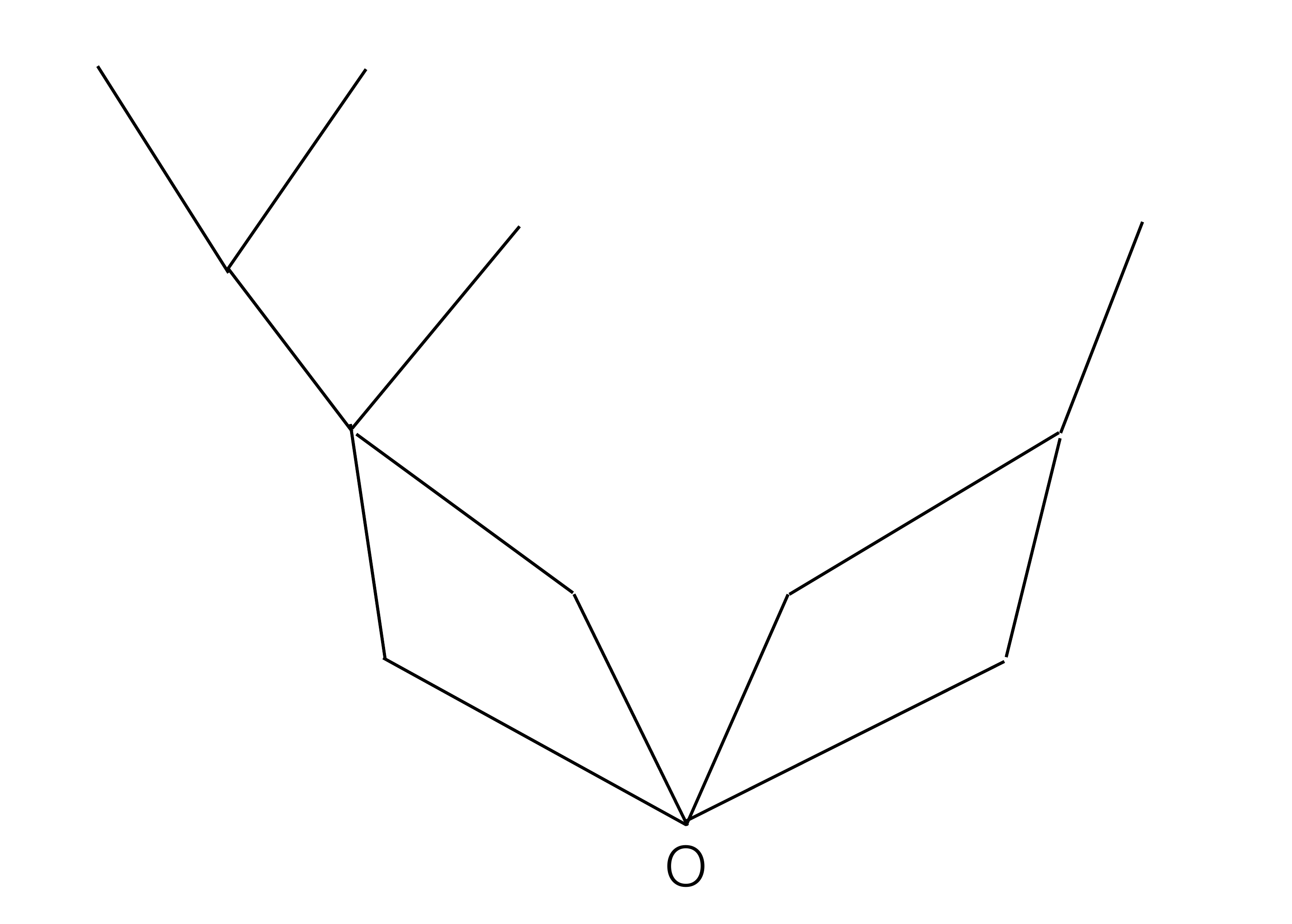} \qquad 
  \includegraphics[scale=0.2] {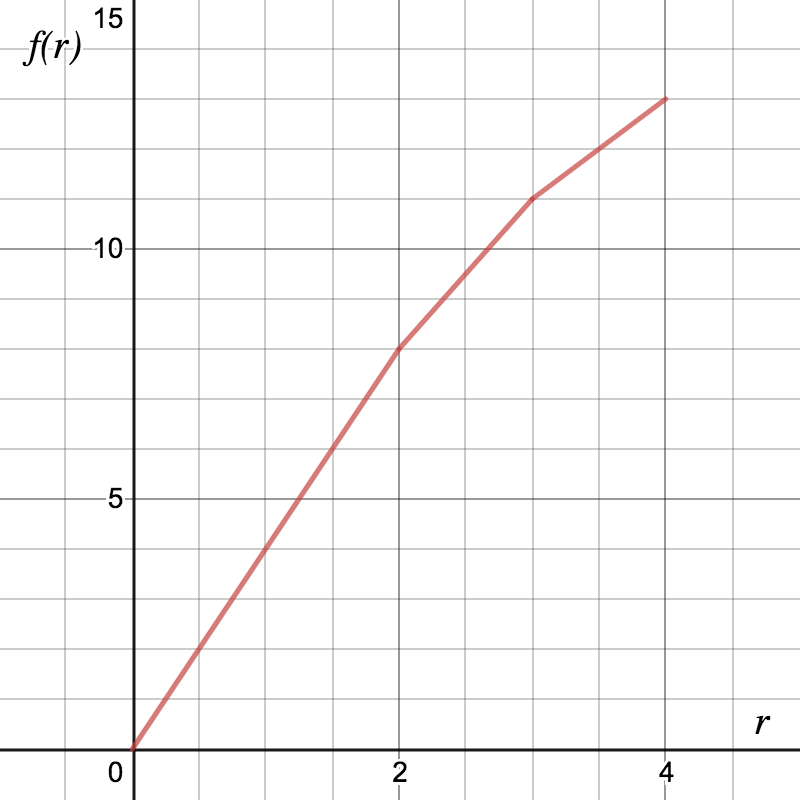}  
  \caption{An example of a network $Q$ (left) and the function $r\mapsto f(r) \equiv f_Q(r)$ (right). All arcs in $Q$ are unit-length.}
  \label{fig:concave}
    \end{center}
\end{figure}

More generally, we have established the following approximation.
\begin{corollary}
Suppose that for the network $Q$ it holds that $\sup_{r>0}f_Q(r)/r \leq \alpha \deg_{Q}(O)$, for some $\alpha>1$. Then $S^*$
approximates the optimal randomized ratio of $Q$ within a factor of at most $\alpha$. 
\label{cor:randomized.approximation.f}
\end{corollary}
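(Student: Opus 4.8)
The plan is simply to chain together the exact formula for the deterministic competitive ratio, the hypothesis, and the degree lower bound on $\rho$. First I would recall from Theorem~\ref{thm:cont-determ} that $S^*$ is an optimal deterministic search and that its competitive ratio is exactly $\sigma_{S^*} = \sigma = \sup_{r>0} f_Q(r)/r$. Next I would apply the hypothesis $\sup_{r>0} f_Q(r)/r \le \alpha \deg_Q(O)$ to get $\sigma \le \alpha \deg_Q(O)$. Then I would invoke Lemma~\ref{lemma:randomized.degree}, which yields $\rho \ge \deg_Q(O)$, i.e.\ $\deg_Q(O) \le \rho$. Combining these three facts gives
\[
\sigma_{S^*} = \sigma \le \alpha \deg_Q(O) \le \alpha \rho,
\]
so the competitive ratio of the water-filling search $S^*$ is within a factor of $\alpha$ of the optimal randomized competitive ratio, which is the claim.

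I do not anticipate any genuine obstacle: the statement is a direct corollary of material already in place. The substantive ingredient is Lemma~\ref{lemma:randomized.degree} (the bound $\rho \ge \deg_Q(O)$), which is assumed available, while Theorem~\ref{thm:cont-determ} pins down $\sigma_{S^*}$ exactly so that no estimation is needed on the deterministic side. The only point worth a remark is that the hypothesis is non-vacuous precisely because $\alpha>1$: one always has $\sigma = \sup_{r>0} f_Q(r)/r \ge \deg_Q(O)$ (the slope of $f_Q$ at the root is $\deg_Q(O)$), so the assumed upper bound $\alpha\deg_Q(O)$ must exceed $\deg_Q(O)$; this consistency check, however, is not used in the argument itself.
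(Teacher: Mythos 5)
Your proof is correct and matches the paper's intended argument: the paper states this corollary without an explicit proof, remarking only that it ``has been established,'' precisely because it follows by chaining Theorem~\ref{thm:cont-determ}, the hypothesis, and Lemma~\ref{lemma:randomized.degree} exactly as you do. Your side remark that $\sup_{r>0} f_Q(r)/r \ge \deg_Q(O)$ always holds is also accurate and consistent with the paper's Corollary~\ref{cor:concave}.
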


\subsection{Optimal randomized strategies are complex: $Y$-networks}
\label{subsec:randomized.y}

We now consider a class of the simplest networks for which the function $f_Q$ is not concave, and thus Corollary~\ref{cor:concave} does not
apply. In particular, we consider the {\em $Y$-network} depicted in Figure~\ref{fig:Ynet} consisting of a node $v$ which is incident 
to three arcs of lengths $1$, $L$ and $M \ge L$. The root node is the other endpoint of the arc of length $1$. We refer to the arc of 
length $L$ as the ``left arc'' and the arc of length $M$ as the ``right arc''.

\begin{figure}[htb!]
  \begin{center}
    \includegraphics[scale=0.4]{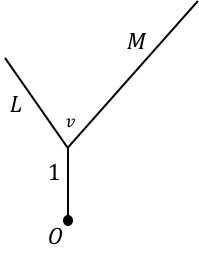}
    \caption{The $Y$-network.}
    \label{fig:Ynet}
  \end{center}
\end{figure}

Clearly the optimal Hider strategy on the $Y$-network will hide on the arc incident to the root with probability $0$. Let $A$ be the subset of $Q$ consisting of all the points on the left arc at distance at most $x$ from $v$ and all the points on the right arc at distance at most $y$ from $v$. From
Theorem~\ref{thm:lbound}, by using the strategy $h_A$, the Hider ensures that the competitive ratio is at least
\begin{align*}
\rho &\ge \frac{d(A)+\lambda(A)/2}{\overline{d}(A)}\\
&\frac{1+ (x+y)/2}{1+(x/(x+y))(x/2)+(y/(x+y))y/2}\\
& = 1+\frac{2xy}{x(x+2)+y(y+2)}.
\end{align*}  
By elementary calculus, this bound is maximized for $x=L$ and $y=\max\{M, \sqrt{L(L+2)}\}$, giving
\begin{align}
\rho &\ge V:=
1+ \frac{2LM'}{L(L+2)+M'(M'+2)},
\label{eq:Y-lb}
\end{align}
where $M'=\max\{M, \sqrt{L(L+2)}\}$.

We show that the expression $V$ given in~(\ref{eq:Y-lb}) is indeed the randomized competitive ratio by giving an optimal Searcher strategy. The optimal Searcher strategy mixes between four different strategies which we list below. (Each strategy begins by searching the arc incident to the root, so we do not mention this part of the search.)
\begin{enumerate}
  \item[$A$.] Search the left arc first then search the right arc.
  \item[$B$.] Search the right arc up to length $M'$ first then the left arc then search the remainder of the right arc.
  \item[$C$.] Search the left arc and the right arc at the same time, at speeds proportional to $L$ and $M$ respectively, until the whole of the left arc has been searched, then search the remainder of the right arc. In other words, in the time interval $[1,1+t]$, search $tL/(L+M')$ of the left arc and $tM'/(L+M')$ of the right arc, for $t \le L+M'$, then search the remainder of the right arc.
  \item[$D$.] Begin by searching the right arc, but at some time chosen uniformly at random between $0$ and $M'$, search the whole of the left arc before completing the search of the right arc.
\end{enumerate}

In Table~\ref{tab:Y-search} we list probabilities that the Searcher should choose each of these four strategies along with the expected search time of a point at distance $a \le L$ from $v$ on the left arc and a point at distance $b \le M'$ from $v$ on the right arc.

\begin{table}[h]
  \begin{center}
    \begin{tabular}{|l|c|c|c|}
      \hline
      {\bf Search strategy}& {\bf Probability} & {\bf Expected search time on left} & {\bf Expected search time on right} \\
      \hline
      $A$ & $\frac{2M}{L(L+2)+M'(M'+2)}$ & $1+ a$ & $1+L+b$  \\ 
      $B$ & $\frac{L^2+2L-M^2}{L(L+2)+M'(M'+2)}$ &  $1+M'+a$ & $1+b$ \\ 
      $C$ & $\frac{2L^2}{L(L+2)+M'(M'+2)}$ &  $1 + \frac{a}{L}(L+M')$ & $1 + \frac{b}{M'}(L+M')$ \\
      $D$ & $\frac{2M^2-2L^2}{L(L+2)+M'(M'+2)}$ & $1+\frac{M'}{2}+a$ & $1 + \frac{b}{M'}(L+M')$\\
      \hline
    \end{tabular}
    \caption{An optimal search strategy for the $Y$-network.}
    \label{tab:Y-search}
  \end{center}
\end{table} 
 A simple calculation shows that for points on the left arc at distance $a \le L$ from $v$, the expected search time is $V(1+a)$ and for points on the right arc at distance $b \le M'$ from $v$, the expected search time is $V(1+b)$. For points on the right arc at distance $b > M'$ from $v$ (if such points exist), the expected search time is $1+L+b$, and it is easy to show that this is strictly less than $V(1+b)$. Hence the randomized competitive ratio is $V$.

\subsection{A $5/4$-approximation of the randomized competitive ratio}
\label{subsec:randomized.5/4}

In this section we give a search strategy that is a $5/4$-approximation of the optimal randomized search. This is inspired by the strategy of~\cite{ANGELOPOULOS201951} for the discrete case, namely for searching in a given graph when the Hider can only hide at a vertex.

We first define the concept of a {\em Randomized Depth-First Search} ({\em RDFS}) of a tree $\mathcal{T}$. Let $S$ be any depth-first search of $\mathcal{T}$ and let $S^{-1}$ be the depth-first search that visits the leaf nodes of $\mathcal{T}$ in the reverse order from $S$. Then the randomized search $s$ that chooses between $S$ and $S^{-1}$ equiprobably is a RDFS of $\mathcal{T}$.

\begin{lemma} \label{lem:RDFS}
	Let $s$ be a RDFS of a tree $\mathcal{T}$. Then the expected time $T(s,H)$ at which a point $H \in \mathcal{T}$ is found by $s$ satisfies
	\[
	T(s,H) \le \frac{\lambda(\mathcal{T}) + d(H)}{2}.
	\]
\end{lemma}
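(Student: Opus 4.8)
The plan is to fix the point $H \in \mathcal{T}$ and analyze the two depth-first searches $S$ and $S^{-1}$ symmetrically. Since $s$ chooses between them equiprobably, we have $T(s,H) = \tfrac12\bigl(T(S,H) + T(S^{-1},H)\bigr)$, so it suffices to bound this sum by $\lambda(\mathcal{T}) + d(H)$. The key structural fact about any depth-first search of a tree is that at the moment it first reaches $H$, the set of points already explored is precisely: the unique path $P$ from $O$ to $H$ (of length $d(H)$), together with all the subtrees hanging off $P$ on the ``already-visited side''. In an expanding search realizing a DFS, traversing these hanging subtrees is free once their attachment point on $P$ has been reached, but the subtrees themselves must be fully explored before the search moves further along $P$; concretely, $T(S,H) = d(H) + \lambda(\mathcal{B}_S)$, where $\mathcal{B}_S$ is the union of the branches hanging off $P$ that $S$ explores before $H$.

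The crucial observation is that $S^{-1}$ visits leaves in the reverse order, hence for each vertex $v$ on the path $P$, the branches hanging off $v$ that are searched \emph{before} $H$ under $S$ are exactly those searched \emph{after} $H$ under $S^{-1}$, and vice versa (the one branch of $v$ lying on $P$ itself is common to both and contributes nothing extra). Therefore $\mathcal{B}_S$ and $\mathcal{B}_{S^{-1}}$ are disjoint, and their union is contained in $\mathcal{T} \setminus P$, whose measure is $\lambda(\mathcal{T}) - d(H)$. This gives
\[
T(S,H) + T(S^{-1},H) = 2d(H) + \lambda(\mathcal{B}_S) + \lambda(\mathcal{B}_{S^{-1}}) \le 2d(H) + \bigl(\lambda(\mathcal{T}) - d(H)\bigr) = \lambda(\mathcal{T}) + d(H),
\]
and dividing by $2$ yields the claim.

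The main obstacle is making precise the claim $T(S,H) = d(H) + \lambda(\mathcal{B}_S)$ and the disjointness of $\mathcal{B}_S$, $\mathcal{B}_{S^{-1}}$ at the level of the Lebesgue-measure formalism of Definition \ref{def:cont-exp}, rather than at the purely combinatorial level of a finite tree: one must handle the case where $H$ lies in the interior of an arc (so $P$ ends mid-arc and the ``branch'' structure at $H$'s arc is degenerate), and one should argue that a DFS, viewed as an expanding search, indeed explores each hanging subtree completely before advancing along $P$ — this is essentially the definition of depth-first, but it needs to be stated. A clean way to handle the bookkeeping is to parametrize $P$ by arc length, let $\beta_S(\ell)$ be the total measure of branches attached to the initial segment of $P$ of length $\ell$ that are searched before $H$ under $S$, observe $\beta_S(\ell) + \beta_{S^{-1}}(\ell) \le (\text{total branch measure attached to that segment})$ with the branch on $P$ excluded on both sides, and take $\ell \to d(H)$. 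Once this identity is in place the rest is the one-line averaging argument above.
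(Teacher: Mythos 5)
Your argument is correct and is essentially the paper's proof: the paper writes $T(S,H)+T(S^{-1},H)=\lambda(S(t_1))+\lambda(S(t_2))$ and applies inclusion--exclusion using the observation that $S(t_1)\cap S(t_2)$ is exactly the path from $O$ to $H$, which is precisely your disjoint-branches decomposition in set-theoretic form. The bookkeeping you flag as the ``main obstacle'' is dispatched in the paper by the identity $\lambda(S(t))=t$ from Definition~\ref{def:cont-exp} together with an ``it is easy to see'' for the intersection claim, so no extra machinery is needed.
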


\begin{proof}
	Suppose $s$ is an equiprobable mixture of the depth-first search $S$ and its reverse $S^{-1}$. Let $t_1 = T(S,H)$ and $t_2 = T(S^{-1},H)$. Then 
	\[
	T(s,H) = \frac{t_1+t_2}{2} = \frac{\lambda(S(t_1))+\lambda(S(t_2))}{2}.
	\]
	It is easy to see that $S(t_1) \cap S(t_2)$ is the path from $O$ to $H$, so 
	\[
	\lambda(S(t_1)) + \lambda(S(t_2)) = \lambda(S(t_1) \cup S(t_2)) + \lambda(S(t_1) \cap S(t_s)) \le \lambda(\mathcal{T}) + d(H).
	\]
	The lemma follows.
\end{proof}
Now we can define the randomized search that is a $5/4$-approximation. For an arbitrary network $Q$, let $Q_T$ be its shortest path tree. We will define a search of $Q_T$ which naturally translates to a search of $Q$. First we partition $Q_T$ into infinitely many randomly chosen subsets $R_{j}$, $j \in \mathbb{Z}$. To define the sets $R_j$, we choose numbers $d_j$ uniformly at random from the interval $[2^{j-1},2^j]$, and set $R_j: = \{x \in Q_T:  d_j \le d(x) <d_{j+1}\}$. We call the $R_j$ the {\em levels} of the search.

The {\em randomized doubling strategy} $s$ is defined as follows. At the start of the $j$th iteration, $\cup_{i < j}R_i$ has already been searched, and we shrink it to the root, so that now $R_j$ is a subtree of the resulting network. The $j$th iteration is then a RDFS of $R_j$. Note that this means that $s$ begins with infinitely small RDFS's, similarly to optimal strategies for the linear search problem, as studied in~\cite{gal:minimax}.

Before proving that the randomized doubling strategy is a $5/4$-approximation for the optimal randomized search, we first establish two technical lemmas. Let $Q_j = \{x \in Q_T: 2^{j-1} \le d(x) < 2^j \}$, for $j \in \mathbb{Z}$, and let $Q^j = \cup_{i \le j} Q_i$.

\begin{lemma} \label{lem:rho-bound}
	For any $j \in \mathbb{Z}$,
\[
\left(1-\frac{\overline{d}(Q^j)}{2^j}\right) \lambda(Q^j) \le 2^{j-1} \rho.
\]
\end{lemma}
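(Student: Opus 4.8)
The plan is to invoke the Hider-strategy lower bound of Theorem~\ref{thm:lbound} with the set $A = Q^j$, and then reduce the asserted inequality to the elementary fact that $(2t-1)^2 \ge 0$. First I would observe that $Q^j = \{x \in Q_T : d(x) < 2^j\}$ is an admissible choice for $A$: it is a connected subtree of $Q_T$ (along any root-to-$x$ path in a tree the distance from $O$ is monotone, so $x \in Q^j$ forces the whole path to lie in $Q^j$) and it contains $O$, so $d(Q^j) = 0$. Since $Q_T$ is a shortest path tree of $Q$, the distance function $d$ is preserved, and hence $\lambda(Q^j)$ and $\overline{d}(Q^j)$ are the same whether computed in $Q_T$ or in the corresponding subset of $Q$; thus $h_{Q^j}$ is a legitimate Hider strategy in $\Gamma(Q,O)$ and Theorem~\ref{thm:lbound} gives
\[
\rho \ge \frac{d(Q^j) + \lambda(Q^j)/2}{\overline{d}(Q^j)} = \frac{\lambda(Q^j)}{2\,\overline{d}(Q^j)}.
\]

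Next I would record the trivial bound $\overline{d}(Q^j) \le 2^j$, which holds because every point of $Q^j$ lies at distance strictly less than $2^j$ from $O$. If $\lambda(Q^j) = 0$ the claimed inequality is immediate, so assume $\lambda(Q^j) > 0$ and set $t := \overline{d}(Q^j)/2^j \in (0,1]$. Substituting the bound $\rho \ge \lambda(Q^j)/(2\,\overline{d}(Q^j))$ into the right-hand side $2^{j-1}\rho$ and dividing through by $\lambda(Q^j) > 0$, the desired inequality $\left(1 - \overline{d}(Q^j)/2^j\right)\lambda(Q^j) \le 2^{j-1}\rho$ reduces to $1 - t \le 1/(4t)$, i.e.\ to $4t(1-t) \le 1$, i.e.\ to $(2t-1)^2 \ge 0$, which holds, with equality exactly when $\overline{d}(Q^j) = 2^{j-1}$.

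I do not expect a genuine obstacle here. The only step requiring a little care is the first one, namely confirming that passing from the shortest path tree $Q_T$ to the network $Q$ does not affect the applicability of Theorem~\ref{thm:lbound} to $Q^j$; this is handled by the fact that the shortest path tree preserves distances to the root, so the Hider strategy $h_{Q^j}$ and the quantities entering Theorem~\ref{thm:lbound} are unchanged. Everything after that is a single invocation of Theorem~\ref{thm:lbound} followed by completing a square.
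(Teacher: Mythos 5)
Your proof is correct and follows essentially the same route as the paper: apply Theorem~\ref{thm:lbound} with $A=Q^j$ (using $d(Q^j)=0$) to get $\rho \ge \lambda(Q^j)/(2\overline{d}(Q^j))$, and then observe that the resulting quadratic in $\overline{d}(Q^j)$ is maximized at $2^{j-1}$ — your $(2t-1)^2\ge 0$ step is exactly that maximization. Your added care about connectedness of $Q^j$ and the passage between $Q_T$ and $Q$ is a reasonable (and slightly more explicit) treatment of points the paper leaves implicit.
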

\begin{proof}
	Applying Theorem~\ref{thm:lbound} to $Q_j$,
	\begin{align*}
	\left(1-\frac{\overline{d}(Q^j)}{2^j}\right)\lambda(Q^j) &\le \left(1-\frac{\overline{d}(Q^j)}{2^j}\right) \cdot 2 \overline{d}(Q^j) \rho.
	\end{align*}
	Regarding the right-hand side of the expression above as a quadratic in $\overline{d}(Q^j)$, it is maximized when $\overline{d}(Q^j)=2^{j-1}$, and the lemma follows.
\end{proof}

\begin{lemma} \label{lem:exp-measure}
	The expected measure of $Q_j \cap R_{j-1}$ is $(2 - \overline{d}(Q_j)/2^{j-1})\lambda(Q_j)$. 
\end{lemma}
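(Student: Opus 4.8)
The plan is to first pin down the set $Q_j \cap R_{j-1}$ exactly, and then average over the single random cutoff that controls it. Recall that $R_{j-1} = \{x \in Q_T : d_{j-1} \le d(x) < d_j\}$, where $d_{j-1}$ is uniform on $[2^{j-2},2^{j-1}]$ and $d_j$ is uniform on $[2^{j-1},2^j]$. Every point $x \in Q_j$ satisfies $d(x) \ge 2^{j-1} \ge d_{j-1}$, so the lower constraint defining $R_{j-1}$ is automatically satisfied on $Q_j$; and since $d_j \le 2^j$, the constraint $d(x) < d_j$ already forces $d(x) < 2^j$. Hence $Q_j \cap R_{j-1} = \{x \in Q_T : 2^{j-1} \le d(x) < d_j\}$, a random set that depends only on $d_j$ (in particular the computation will not use independence of the $d_i$, only the marginal law of $d_j$).

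Next I would take the expectation over $d_j$. Since $d_j$ is uniform on an interval of length $2^{j-1}$, this gives
\[
\mathbb{E}\big[\lambda(Q_j \cap R_{j-1})\big] = \frac{1}{2^{j-1}} \int_{2^{j-1}}^{2^j} \lambda\big(\{x \in Q_T : 2^{j-1} \le d(x) < r\}\big)\,dr .
\]
Then I would interchange the order of integration (legitimate since the integrand is nonnegative, by Tonelli): a point $x$ with $2^{j-1} \le d(x) < 2^j$, i.e.\ $x \in Q_j$, lies in the inner set precisely for $r \in (d(x),2^j]$, a set of values of $r$ of measure $2^j - d(x)$, whereas any point outside $Q_j$ never contributes. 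Therefore the double integral equals $\int_{x \in Q_j} (2^j - d(x))\,d\lambda(x) = 2^j \lambda(Q_j) - \overline{d}(Q_j)\lambda(Q_j)$, using the definition of $\overline{d}(Q_j)$ from Section~\ref{subsec:randomized.better}.

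Finally, dividing by $2^{j-1}$ and using $2^j/2^{j-1}=2$ yields $\mathbb{E}[\lambda(Q_j \cap R_{j-1})] = \big(2 - \overline{d}(Q_j)/2^{j-1}\big)\lambda(Q_j)$, which is the claim. I do not anticipate a serious obstacle: the one point that requires care is the set identification in the first step, namely checking that the lower endpoint $d_{j-1}$ of level $R_{j-1}$ always sits below the floor $2^{j-1}$ of $Q_j$ (so it cannot trim $Q_j$ from below) while its upper endpoint $d_j$ always sits inside $[2^{j-1},2^j]$ (so it genuinely cuts through $Q_j$ rather than reaching past it); both facts are immediate from the ranges prescribed for the $d_i$. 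Once $Q_j\cap R_{j-1}$ has been identified, the remainder is a one-line Fubini computation.
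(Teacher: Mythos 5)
Your proof is correct and is essentially the paper's argument: the paper computes, for each fixed $x\in Q_j$, the probability $\frac{2^j-d(x)}{2^{j-1}}$ that $d_j>d(x)$ and integrates over $Q_j$, which is exactly the other side of the Tonelli swap you perform after first conditioning on $d_j$. Your preliminary check that the lower cutoff $d_{j-1}$ never intersects $Q_j$ is a nice explicit justification of a step the paper leaves implicit, but the substance is the same.
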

\begin{proof}
A point $x \in Q_j$ is contained in $R_{j-1}$ if and only if $d_j > x$. This occurs with probability 
$(2^j - d(x))/2^{j-1}$. Therefore, the expected measure of $Q_j \cap R_{j-1}$ is
\[
\int_{x \in Q_j} \left( \frac{2^j - d(x)}{2^{j-1}} \right) \lambda(Q_j)~du(x) = \left(2- \frac{\overline{d}(Q_j)}{2^{j-1}} \right) \lambda(Q_j) .
\]
\end{proof}

\begin{theorem} \label{thm:5/4}
	The randomized doubling strategy $s$ is a $5/4$-approximation of the optimal randomized search. In particular, $\rho_s \le (5/4) \rho + 1/2$.
\end{theorem}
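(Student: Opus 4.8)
\section*{Proof proposal}

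The plan is to fix a point $H$ of the (shortest--path tree) $Q_T$ with $r:=d(H)>0$, bound its expected discovery time $T(s,H)$, and show $T(s,H)\le(\tfrac54\rho+\tfrac12)r$; taking the supremum over $H$ then gives $\rho_s\le\tfrac54\rho+\tfrac12$. Choose $k$ with $r\in[2^{k-1},2^k)$, so $H\in Q_k$. The structural observation is that $H$ lands in level $R_{k-1}$ exactly when $d_k>r$, an event of probability $p:=(2^k-r)/2^{k-1}$, and otherwise in $R_k$; no other level can contain it since $d_{k-1}\le 2^{k-1}\le r<2^k\le d_{k+1}$. Hence it suffices to condition on the three thresholds $d_{k-1},d_k,d_{k+1}$: all other $d_j$ only affect levels strictly preceding $R_{k-1}$, whose total measure is exactly $f(d_{k-1})$, where I write $f:=f_{Q_T}$ (extended by the constant $\mu$ past $r_{\max}$ when $Q$ is bounded).

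Next I would write $T(s,H)$ as the measure searched in all levels before $H$'s level, plus the time the RDFS of $H$'s level spends before finding $H$. The first quantity is $f(d_{k-1})$ if $H\in R_{k-1}$ and $f(d_k)$ if $H\in R_k$. For the second, after shrinking the already--searched region to a point $O'$, the level $R_{j(H)}$ becomes a rooted subtree with $\lambda(R_{j(H)})=f(d_{j(H)+1})-f(d_{j(H)})$ in which $H$ sits at depth $r-d_{j(H)}$, so Lemma~\ref{lem:RDFS} bounds its contribution by $\tfrac12\bigl(\lambda(R_{j(H)})+(r-d_{j(H)})\bigr)$. Taking expectations over $d_{k-1},d_k,d_{k+1}$ and using independence, the whole estimate collapses to
\[
2\,T(s,H)\;\le\;p\,\mathbb{E}[f(d_{k-1})]+\mathbb{E}[f(d_k)]+(1-p)\,\mathbb{E}[f(d_{k+1})]\;+\;r\;-\;p\,\mathbb{E}[d_{k-1}]\;-\;\mathbb{E}[d_k\mathbf{1}_{d_k\le r}],
\]
where $\mathbb{E}[f(d_j)]=2^{1-j}(I_j-I_{j-1})$ with $I_j:=\int_0^{2^j}f$, and the remaining moments of the uniform $d_j$'s are explicit.

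The heart of the argument is bounding $p\,\mathbb{E}[f(d_{k-1})]+\mathbb{E}[f(d_k)]+(1-p)\,\mathbb{E}[f(d_{k+1})]$. Via the layer--cake identity $1-\overline d(Q^j)/2^j=\tfrac1{2^j\lambda(Q^j)}\int_0^{2^j}f$ and the ensuing cancellation, Lemma~\ref{lem:rho-bound} is equivalent to the clean inequality $I_j\le 2^{2j-1}\rho$ for all $j$ (valid for bounded $Q$ too, reading $\lambda(Q^j)=\mu$ when $2^j>r_{\max}$); conversely, connectivity of $Q_T$ gives the matching lower bound $I_j\ge 2^{2j-1}$ whenever $2^j\le r_{\max}$, since the shortest path to a point at distance $2^j$ already contributes $\int_0^{2^j}\delta\,d\delta$. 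Viewing the combination as a linear function of $I_{k-2},I_{k-1},I_k,I_{k+1}$ under monotonicity and these box constraints, its maximum is attained at $I_{k-2}=2^{2k-5}$, $I_k=2^{2k-1}\rho$, $I_{k+1}=2^{2k+1}\rho$, and $I_{k-1}=2^{2k-3}\rho$ or $2^{2k-3}$ according as $p>\tfrac12$ or $p<\tfrac12$ (i.e.\ as $r<\tfrac34\cdot 2^k$ or $r>\tfrac34\cdot 2^k$). Substituting back and setting $\alpha:=r/2^{k-1}\in[1,2)$, in both cases one obtains $2\,T(s,H)\le(\tfrac52-c(\alpha))\,r\rho+e(\alpha)\,r$ with $c(\alpha)\ge 0$ vanishing at $\alpha\in\{1,2\}$ and $e(\alpha)$ a small nonnegative quantity; a short case check using only $\rho\ge\deg_Q(O)\ge 1$ then yields $\hat T(s,H)=T(s,H)/r\le\tfrac54\rho+\tfrac12$ for every $\alpha$, the bound being essentially tight at $\alpha=1$, where it reads $\hat T(s,H)\le\tfrac54\rho$.

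The step I expect to be the main obstacle is precisely this last one: pushing the coefficient of $\rho$ down to $5/4$ rather than the $3/2$ that a careless estimate produces. The loss comes from the ``overflow'' term $\tfrac12\lambda(R_{j(H)})$, because $R_{j(H)}$ reaches out to distance $d_{j(H)+1}\le 2^{k+1}$, far beyond $H$; the savings is recovered only by exploiting (i) the \emph{sharp} form of Lemma~\ref{lem:rho-bound} (the $2^{j-1}\rho$ produced by the quadratic maximization in its proof), (ii) the elementary lower bound $I_{k-2}\ge 2^{2k-5}$, which forces the levels strictly below $H$ to carry a guaranteed amount of measure, and (iii) keeping track of the correlation between the event $\{H\in R_{k-1}\}$ and the size of $R_{k-1}$, i.e.\ treating $p>\tfrac12$ and $p<\tfrac12$ separately. (An alternative route to (iii) and the overflow term splits $R_{j(H)}$ into its $Q_k$-- and $Q_{k+1}$--parts and controls the expected size of each via Lemma~\ref{lem:exp-measure}.) Finally, a brief remark confirms that all uses of $f$ and of Lemma~\ref{lem:rho-bound} survive for bounded $Q$ after extending $f$ by $\mu$ past $r_{\max}$, because the lower bounds are only ever applied to $I_{k-2}$ and $I_{k-1}$, whose arguments $2^{k-2},2^{k-1}$ never exceed $r\le r_{\max}$.
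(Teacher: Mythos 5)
Your proof is correct and essentially coincides with the paper's: both condition on whether $H$ lands in $R_{k-1}$ or $R_k$, reduce $\mathbb{E}[T(s,H)]$ to a linear expression in four radial quantities, and close by applying Lemma~\ref{lem:rho-bound} to each with the same case split at $r=3\cdot 2^{k-2}$ (your $p\gtrless\tfrac12$). The change of variables to $I_j=\int_0^{2^j}f$, under which Lemma~\ref{lem:rho-bound} reads $I_j\le 2^{2j-1}\rho$, is a tidy repackaging of the paper's bookkeeping via $\lambda(Q^j)$ and $\overline{d}(Q^j)$ (your extra lower bound $I_{k-2}\ge 2^{2k-5}$ substitutes for the paper's simpler observation that the $Q^{k-2}$ term is non-positive, and is not in fact needed), and your final case check does go through: in both regimes the required inequality reduces to one whose left-hand side is non-positive, yielding $\hat{T}(s,H)\le\tfrac54\rho+\tfrac12$ with equality in the $\rho$-coefficient only at $\alpha\in\{1,2\}$.
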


\begin{proof}
Suppose that the randomized competitive ratio of $s$ is maximized at some point $x$ which is contained in $Q_k$, for some $k$. Let $J$ be a random variable that takes the value $k-1$ or $k$ depending on whether $x$ is contained in $R_{k-1}$ or $R_k$, respectively. Let $L^J = \lambda(\cup_{i <J} R_i) + \lambda(R_J)/2$ be the random variable equal to the sum of half the measure of $R_J$ and the measure of all levels preceding $R_J$. Then, by Lemma~\ref{lem:RDFS}, the expected search time of $x$ is at most $\mathbb{E}(L^J) + d(x)/2$. Hence
\[
\frac{\rho_s}{\rho} \le \frac {(\mathbb{E}(L^J) + d(x)/2)/d(x)}{\rho} = \frac{\mathbb{E}(L^J)/d(x)}{\rho} + \frac{1}{2 \rho}.
\]
We just have to show that $\mathbb{E}(L^J)/(d(x)\rho) \le 5/4$. Let $L^J_1, L^J_2$ and $L^J_3$ be the contributions to $L^J$ from $Q_{k-1},Q_{k}$ and $Q_{k+1}$, respectively, so that $L^J=\lambda(Q^{k-2}) + L^J_1+L^J_2+L^J_3$.

We first compute $\mathbb{E} (L^J_1)$. Note that if $d_k \le x$, which happens with probability $(d(x)-2^{k-1})/2^{k-1}$, then $J=k$ so that $R_J$ is disjoint from $Q_{k-1}$. In this case $L^J_1=\lambda(Q_{k-1})$. Otherwise, with probability $(2^k-d(x))/2^{k-1}$, we have that $J=k-1$, and $L^J_1$ is equal to the sum of half the expected measure of $Q_{k-1} \cap R_{k-1}$ and the measure of $Q_{k-1} \cap R_{k-2}$, or equivalently, the sum of $\lambda(Q_{k-1})/2$ and half the expected measure of $Q_{k-1} \cap R_{k-2}$. Applying Lemma~\ref{lem:exp-measure}, with $j=k-1$, this is equal to
\[
\frac{\lambda(Q_{k-1})}{2} + \frac{1}{2} \left( 2- \frac{\overline{d}(Q_{k-1})}{2^{k-2}} \right) \lambda(Q_{k-1}) = \left(\frac{3}{2} - \frac{\overline{d}(Q_{k-1})}{2^{k-1}}  \right)\lambda(Q_{k-1}) .
\] 
Putting this together,
\begin{align}
\mathbb{E}(L^J_1) &= \left( \frac{d(x)-2^{k-1}}{2^{k-1}} \right)\lambda(Q_{k-1}) + \left( \frac{2^k-d(x)}{2^{k-1}} \right) \left(\frac{3}{2} - \frac{\overline{d}(Q_{k-1})}{2^{k-1}}  \right)\lambda(Q_{k-1}) \nonumber \\
& = \left( \left(2 - \frac{d(x)}{2^k} \right) - \left(2 - \frac{d(x)}{2^{k-1}} \right) \frac{\overline{d}(Q_{k-1})}{2^{k-1}} \right) \lambda(Q_{k-1}). \label{eq:L1}
\end{align}
Next, we consider $L^J_3$. With probability $(2^k-d(x))/2^{k-1}$, we have that $d_k > d(x)$, so that $J=k-1$, and $R_J$ is disjoint from $Q_{k+1}$. In this case, $L^J_3$ is zero. Otherwise, $J=k$, and $L^J_3$ is equal to half the expected measure of $Q_{k+1} \cap R_k$. Applying Lemma~\ref{lem:exp-measure} again, this time with $j=k+1$, gives
\begin{align}
\mathbb{E}(L^J_3) =\left( \frac{d(x) - 2^{k-1}}{2^{k-1}} \right) \left( 1 - \frac{\overline{d}(Q_{k+1})}{2^{k+1}} \right)\lambda(Q_{k+1}). \label{eq:L3}
\end{align}
Lastly, we consider $L^J_2$. Denote by $Q_k[d]$ the set of points in $Q_k$ at distance at most $d$ from $O$. If $d_k \le d(x)$, then $J=k-1$ and $L^J_2 = \lambda(Q_k[d_k]) + \lambda(Q_k - Q_k[d_k])/2$. If $d_k > d(x)$ then $J=k$ and $L^J_2 = \lambda(Q_k[d_k])/2$. Integrating over all possible value of $y=d_k$, we obtain
\begin{align*}
\mathbb{E} (L^J_2) & = \frac{1}{2^{k-1}} \int_{2^{k-1}}^{d(x)}  \lambda(Q_k[y]) + \lambda(Q_k - Q_k[y])/2 ~dy + \frac{1}{2^{k-1}} \int_{d(x)}^{2^k} \lambda(Q_k[y])/2~dy  \\
&= \frac{1}{2^{k-1}} \int_{2^{k-1}}^{d(x)} \lambda(Q_k)/2~dy + \frac{1}{2}\int_{2^{k-1}}^{2^k}  \frac{1}{2^{k-1}} \lambda(Q_k[y])~dy.
\end{align*}
Now, the second integral above is equal to the expected measure of $Q_k \cap R_{k-1}$, and using Lemma~\ref{lem:exp-measure} with $j=k$ gives
\begin{align}
\mathbb{E}(L^J_2) &= \left( \frac{d(x) - 2^{k-1}}{2^k} \right) \lambda(Q_k)  + \frac{1}{2} \left( 2 - \frac{\overline{d}(Q_k)}{2^{k-1}} \right) \lambda(Q_k) \nonumber \\
& = \left( \left( \frac{d(x)}{2^k} + \frac{1}{2} \right)  - \frac{ \overline{d}(Q_k)}{2^k} \right)\lambda(Q_k). \label{eq:L2}
\end{align}
Substituting Equations~(\ref{eq:L1}), (\ref{eq:L3}) and~(\ref{eq:L2}) in $\mathbb{E}(L)=\lambda(Q^{k-2})+\mathbb{E}(L^J_1)+\mathbb{E}(L^J_2)+\mathbb{E}(L^J_3)$ and rearranging, we obtain
\begin{align*}
\mathbb E(L^J) &= \left(1- \frac{d(x)}{2^k} \right) \left( \frac{\overline{d}(Q^{k-2})}{2^{k-2}} - 1 \right)\lambda(Q^{k-2}) + \left(\frac{3}{2} - \frac{d(x)}{2^{k-1}} \right) \left( 1 - \frac{\overline{d}(Q^{k-1})}{2^{k-1}} \right) \lambda(Q^{k-1}) \\
&  + \left( \frac{3}{2} - \frac{d(x)}{2^k} \right) \left( 1 - \frac{\overline{d}(Q^k)}{2^k} \right) \lambda(Q^k) + \left( \frac{d(x)}{2^{k-1}} - 1 \right)\left( 1 - \frac{ \overline{d}(Q^{k+1})}{2^{k+1}} \right) \lambda(Q^{k+1}).
\end{align*}
The first term in the expression on the right-hand side above is non positive, since $d(x) \le 2^k$ and $\overline{d}(Q^{k-2}) \le 2^{k-2}$, so, dividing by $d(x)$, we obtain
\begin{align}
\frac{\mathbb E(L^J)}{d(x)} &= \left(\frac{3}{2d(x)} - \frac{1}{2^{k-1}} \right) \left( 1 - \frac{\overline{d}(Q^{k-1})}{2^{k-1}} \right) \lambda(Q^{k-1}) 
  + \left( \frac{3}{2d(x)} - \frac{1}{2^k} \right) \left( 1 - \frac{\overline{d}(Q^k)}{2^k} \right) \lambda(Q^k) \nonumber \\ 
&+ \left( \frac{1}{2^{k-1}} - \frac{1}{d(x)} \right)\left( 1 - \frac{ \overline{d}(Q^{k+1})}{2^{k+1}} \right) \lambda(Q^{k+1}). \label{eq:EL/d}
\end{align}
Each of the three terms on the right-hand side of~(\ref{eq:EL/d}) is non-negative for $2^{k-1} \le d(x) \le 3 \cdot 2^{k-2}$, and in this case it follows from Lemma~\ref{lem:rho-bound} that
\begin{align*}
\frac{\mathbb E(L^J)}{d(x) \rho} &= \left(\frac{3}{2d(x)} - \frac{1}{2^{k-1}} \right) 2^{k-2} 
  + \left( \frac{3}{2d(x)} - \frac{1}{2^k} \right)2^{k-1} + \left( \frac{1}{2^{k-1}} - \frac{1}{d(x)} \right) 2^k \\
  & = 1 + \frac{2^{k-3}}{d(x)} \\
  & \le 5/4,
\end{align*}
where the maximum is attained at $d(x)=2^{k-1}$. If, on the other hand, $3 \cdot 2^{k-2} \le d(x) \le 2^k$, then the first term on the right-hand side of~(\ref{eq:EL/d}) is negative and the other two are positive. Hence, applying Lemma~\ref{lem:rho-bound} again we obtain
\begin{align*}
\frac{\mathbb E(L^J)}{d(x) \rho} &=    \left( \frac{3}{2d(x)} - \frac{1}{2^k} \right)2^{k-1} + \left( \frac{1}{2^{k-1}} - \frac{1}{d(x)} \right) 2^k \\
  & = 3/2  - 2^{k-2}/d(x)\\
  & \le 5/4,
\end{align*}
where the maximum is attained at $d(x)=2^k$. This completes the proof.
\end{proof}

\section{Randomized pathwise competitive ratio}
\label{sec:pathwise}

In this section we study search strategies for a network $Q$, in the pathwise search paradigm.
We begin by showing that a strategy based on the approach of~\cite{koutsoupias:fixed} is a $3+2\sqrt{2}\approx 5.828$-approximation
of the randomized competitive ratio. Let $r>1$ be a parameter that will be determined 
later, and recall that $Q[r^i] \subseteq Q$ denotes the network of points in $Q$ at distance at most $r^i$ from $O$ for all integers $i$. 
The strategy works in iterations, until
the Hider is located. Namely in iteration $i$, the Searcher follows a {\em Random Chinese Postman Tour} of the network $Q[r^i]$. More precisely, the Searcher
computes a Chinese Postman Tour of $Q[r^i]$, (i.e., a minimal time tour that visits the points of all arcs in $Q[r^i]$), 
then mixes equiprobably between the tour itself and its reversed one. 
Let $s_r$ denote this strategy. The following theorem is based on the approach of~\cite{koutsoupias:fixed}. 

\begin{theorem}
Strategy $s_r$ is a $2(\frac{r}{r-1}+\frac{r}{2})$-approximation of the randomized competitive ratio. In particular, for $r=1+\sqrt{2}$, 
we have that $\rho(s_r) \leq (3+2\sqrt{2})\rho$.
\label{thm:koutsoupias}
\end{theorem}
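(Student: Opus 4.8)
I would analyze $s_r$ pointwise. Fix a Hider location $H$, write $d=d(H)$, and let $k$ be the unique integer with $r^{k-1}<d\le r^{k}$. Then $H\in Q[r^{k}]$ but $H\notin Q[r^{j}]$ for every $j<k$, so the Searcher cannot reach $H$ before iteration $k$ and does reach it during iteration $k$. Hence
\[
T(s_r,H)\ \le\ \sum_{j<k}\mathrm{CPT}(Q[r^{j}])\ +\ F_k ,
\]
where $\mathrm{CPT}(\cdot)$ denotes the length of a Chinese Postman Tour and $F_k$ is the expected time spent inside iteration $k$ until $H$ is found. Randomizing the direction of the tour halves $F_k$: if $C$ is the tour of $Q[r^k]$ and $H$ is first met at time $t_1$ along $C$ and at time $t_2$ along the reversed tour $C^{-1}$, then $t_1+t_2\le\mathrm{CPT}(Q[r^k])$ (the first‑visit times into a closed walk and its reverse sum to at most its length), so $F_k=(t_1+t_2)/2\le\tfrac12\,\mathrm{CPT}(Q[r^k])$. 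The infinite geometric sum over $j<k$ converges by condition~(\ref{eq:cond}), since $\mathrm{CPT}(Q[r^{j}])\le2\lambda(Q[r^{j}])\le2Mr^{j}\to0$ as $j\to-\infty$, so each iteration may be taken to start and end at $O$ with cost exactly $\mathrm{CPT}(Q[r^{j}])$.

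The second ingredient is the bound $\mathrm{CPT}(Q[r])\le 2\rho\,r$ for every $r>0$. I would prove this by arguing as in Theorem~\ref{thm:lbound}, but with $A=Q[r]$ (so $d(A)=0$ and $\overline d(Q[r])\le r$): inequality~(\ref{eq:lb}) applied to $h_{Q[r]}$ gives $\rho\ge T(S,u_{Q[r]})/\overline d(Q[r])$ for every pathwise search $S$, so it suffices to show $T(S,u_{Q[r]})\ge \mathrm{CPT}(Q[r])\,\overline d(Q[r])/(2r)$. Writing $c_S(t)$ for the measure of $Q[r]$ covered by time $t$, one has $T(S,u_{Q[r]})=\lambda(Q[r])^{-1}\int_{0}^{\infty}\big(\lambda(Q[r])-c_S(t)\big)\,dt$, and the desired lower bound follows from two pathwise‑specific facts: a unit‑speed path accrues newly‑covered measure of $Q[r]$ at rate at most $1$, and it cannot cover all of $Q[r]$ before time essentially $\mathrm{CPT}(Q[r])$ (at least $\mathrm{CPT}(Q[r])-r$). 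Intuitively, balls with a large Chinese Postman cost are ``bushy,'' and bushiness is exactly what forces $T(S,u_{Q[r]})$, hence $\rho$, to be correspondingly large; this is what lets us avoid the extra factor of $2$ that the crude estimate $\mathrm{CPT}(Q[r])\le2\lambda(Q[r])$ together with $\lambda(Q[r])\le2\rho r$ would cost.

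Combining the two ingredients, $T(s_r,H)\le\sum_{j<k}2\rho r^{j}+\rho r^{k}=2\rho\,r^{k}/(r-1)+\rho r^{k}$, and since $d>r^{k-1}=r^{k}/r$,
\[
\hat T(s_r,H)\ =\ \frac{T(s_r,H)}{d}\ <\ r\!\left(\frac{2}{r-1}+1\right)\rho\ =\ 2\!\left(\frac{r}{r-1}+\frac r2\right)\rho ,
\]
uniformly in $H$, so $\rho_{s_r}\le 2\big(\tfrac{r}{r-1}+\tfrac r2\big)\rho$. Finally $\tfrac{d}{dr}\big[\tfrac{2r}{r-1}+r\big]=1-\tfrac{2}{(r-1)^{2}}$ vanishes at $r=1+\sqrt2$, where $\tfrac{2r}{r-1}+r=3+2\sqrt2$, which gives the stated value.

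\textbf{Main obstacle.} The geometric summation and the optimization over $r$ are routine; the crux is the inequality $\mathrm{CPT}(Q[r])\le2\rho\,r$. One must verify that the pathwise‑refined lower bound on $T(S,u_{Q[r]})$ matches the \emph{full} (closed) Chinese Postman cost of the ball, with no additive slack: a bound of the form $\mathrm{CPT}(Q[r])\le 2\rho r+O(r)$ would leave an additive term in the final estimate, so the tight form is what delivers the clean multiplicative constant. Secondary technical points are the boundary case $d=r^{k}$ and justifying the (harmless) infinitely many vanishingly‑small initial iterations.
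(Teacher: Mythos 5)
Your overall architecture coincides with the paper's proof of Theorem~\ref{thm:koutsoupias}: decompose the search time of a point in the $k$th annulus as the full tour lengths of the earlier balls plus half the tour length of $Q[r^k]$ (the halving coming from the equiprobable reversal), compare each term against a lower bound of the form $l(Q[r^i])\le 2\rho r^i$, sum the geometric series, divide by $d(H)>r^{k-1}$, and optimize $r$; your arithmetic and the choice $r=1+\sqrt2$ match (\ref{eq:pathwise.koutsoupias.1})--(\ref{eq:pathwise.koutsoupias.3}) exactly. You are also right that the entire content of the theorem sits in the inequality $\mathrm{CPT}(Q[r])\le 2\rho r$, and that the crude chain $\mathrm{CPT}(Q[r])\le 2\lambda(Q[r])\le 4\rho r$ would only yield twice the claimed constant. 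The paper asserts this key inequality as (\ref{eq:pathwise.koutsoupias.2}), attributing it to the uniform Hider on $Q[r^i]$, without the detour you take through the distance-weighted Hider $h_{Q[r]}$.

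However, the specific route you propose for establishing the key inequality does not work. The intermediate claim you reduce to, namely $T(S,u_{Q[r]})\ge \mathrm{CPT}(Q[r])\,\overline d(Q[r])/(2r)$ for every pathwise search $S$, is false. Take a ``broom'': a handle of length $1$ from $O$ with $m$ bristles of total length $1$ attached at its far end, $m$ large. Then (up to $o(1)$ as $m\to\infty$) $r=1$, $\lambda(Q[r])=2$, $\overline d(Q[r])=3/4$ and $\mathrm{CPT}(Q[r])=4$, so your right-hand side equals $3/2$; but the search that walks down the handle and then performs a depth-first search of the bristles achieves $T(S,u_{Q[r]})=\tfrac12\cdot\tfrac12+\tfrac12\cdot 2=\tfrac54<\tfrac32$. (The conclusion $\rho\ge\mathrm{CPT}(Q[r])/(2r)=2$ does hold for the broom, but it is witnessed by a Hider concentrated on the bristles, not by the uniform or distance-weighted one.) Your two ``pathwise-specific facts'' cannot close this gap either: knowing that coverage of $Q[r]$ cannot finish before time $\mathrm{CPT}(Q[r])-r$ gives no useful lower bound on $\int_0^\infty\bigl(\lambda(Q[r])-c_S(t)\bigr)\,dt$, because the uncovered measure may be made arbitrarily small long before coverage completes, so one only recovers $T(S,u_{Q[r]})\ge\lambda(Q[r])/2$ and hence the lossy factor $4$. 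So the crux you correctly identified still needs a different argument (for instance, a Hider supported on a suitably chosen subset of $Q[r]$ rather than all of it, or an appeal to the value $\bar\mu/2$ of the unnormalized pathwise search game on trees), and as written your proof has a genuine gap at exactly that step.
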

\begin{proof}
For a fixed Hider strategy, let $j$ denote the iteration in which $s_r$ locates the Hider, and let $C_i$ denote the contribution to the expected search cost of $s_r$
in iteration $i$ for $i \le j$. Moreover, let $l(Q[r^i])$ denote the length of the optimal Chinese Postman Tour in $Q[r^i]$. Then,
\begin{equation}
\rho(s_r)\leq \sup_{j\geq 1} \frac{\sum_{i=-\infty}^j C_i}{r^{j-1}}.
\label{eq:pathwise.koutsoupias.1}
\end{equation}
By considering a Hider that hides uniformly at random on $Q[r^i]$, we obtain that 
\begin{equation}
\rho\geq \frac{l(Q[r^i])}{2r^i}.
\label{eq:pathwise.koutsoupias.2}
\end{equation}  
Lastly, since the Searcher discovers the Hider on iteration $j$, we have that 
\begin{equation}
C_i=l(Q[r^i]), \textrm{ if $i<j$ \ and } \ C_i=\frac{l(Q[r^i])}{2}.
\label{eq:pathwise.koutsoupias.3}
\end{equation}  
By combining~\eqref{eq:pathwise.koutsoupias.1},~\eqref{eq:pathwise.koutsoupias.2},~\eqref{eq:pathwise.koutsoupias.3}, we obtain that
\[
\rho(s_r) \leq 2 \cdot \sup_{j \geq 1} \frac{\sum_{i=-\infty}^{j-1}r^i+\frac{1}{2}r^j}{r^{j-1}} \rho = 2 \left(\frac{r}{r-1}+\frac{r}{2}\right) \rho.
\]
The optimal choice of $r$ that minimizes the above expression is $r=1+\sqrt{2}$, from which it follows that 
$\rho(s_r) \leq (3+2\sqrt{2})\rho$.
\end{proof}
We now show how the randomized doubling strategy of Section~\ref{subsec:randomized.5/4} can be adapted for the pathwise case to give an improved approximation ratio of $5$. We define the shortest path trees $Q_j$ and the random levels, $R_j, j \in \mathbb{Z}$ as in the expanding search setting. A Randomized Depth-First Search (RDFS) is defined similarly in the pathwise search setting as an equiprobable mixture of a depth-first search $S$ and its reverse search $S^{-1}$, except that we stipulate that $S$ and $S^{-1}$ return to the root $O$ after visiting all the leaf nodes. We will use the following lemma, whose proof can be found, for example, in~\cite{searchgames}.
\begin{lemma} \label{lem:pw-RDFS}
	Let $s$ be a (pathwise) RDFS of a tree $\mathcal{T}$. Then the expected time $T(s,H)$ a point $H \in \mathcal{T}$ is found by $s$ satisfies
	\[
	T(s,H) \le \lambda(\mathcal{T}).
	\]
\end{lemma}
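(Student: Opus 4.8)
The plan is to exploit the time-reversal symmetry between the two depth-first searches making up $s$. Write $s$ as the equiprobable mixture of a depth-first search $S$ of $\mathcal{T}$ and its reverse $S^{-1}$. A depth-first search of a tree that visits all leaves and returns to the root traverses each arc exactly twice, so both $S$ and $S^{-1}$ have total duration $2\lambda(\mathcal{T})$, and we may regard them as maps $[0,2\lambda(\mathcal{T})]\to\mathcal{T}$ with $S^{-1}(t)=S(2\lambda(\mathcal{T})-t)$.

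Fix $H\in\mathcal{T}$. The set $\{t:S(t)=H\}$ is closed and nonempty, so we may set $t_1=\min\{t:S(t)=H\}=T(S,H)$ and $\tau=\max\{t:S(t)=H\}$, and clearly $t_1\le\tau$. The key observation is that reversing time turns the \emph{last} visit of $S$ to $H$ into the \emph{first} visit of $S^{-1}$ to $H$: substituting $u=2\lambda(\mathcal{T})-t$ gives
\[
T(S^{-1},H)=\min\{t:S(2\lambda(\mathcal{T})-t)=H\}=2\lambda(\mathcal{T})-\max\{u:S(u)=H\}=2\lambda(\mathcal{T})-\tau.
\]
Averaging the two search times then yields
\[
T(s,H)=\tfrac12\,T(S,H)+\tfrac12\,T(S^{-1},H)=\tfrac12\bigl(t_1+2\lambda(\mathcal{T})-\tau\bigr)=\lambda(\mathcal{T})-\tfrac12(\tau-t_1)\le\lambda(\mathcal{T}),
\]
since $\tau\ge t_1$. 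The boundary case $H=O$ (where $t_1=0$ and $\tau=2\lambda(\mathcal{T})$) is covered by the same computation.

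The argument is immediate once the time-reversal identity is set up, so there is no genuine obstacle; the only points needing (routine) care are that a depth-first search returning to the root has total length exactly $2\lambda(\mathcal{T})$ and that the first- and last-visit times are well defined. One could alternatively imitate the measure-theoretic argument of Lemma~\ref{lem:RDFS} by bounding the arc sets swept by $S$ up to $t_1$ and by $S^{-1}$ up to $T(S^{-1},H)$, but the time-reversal computation is cleaner and avoids having to track arcs traversed twice.
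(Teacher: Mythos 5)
Your proof is correct. The paper does not actually prove Lemma~\ref{lem:pw-RDFS}; it cites \cite{searchgames} for it, and your time-reversal computation is precisely the standard argument given there (and the pathwise analogue of the paper's own proof of Lemma~\ref{lem:RDFS}, where the sum of the two first-visit times is bounded by an inclusion--exclusion on the searched sets). The only step worth a word of justification is the identification $S^{-1}(t)=S(2\lambda(\mathcal{T})-t)$: on a tree, a closed depth-first tour is determined by the ordering of children at each internal vertex, which is in turn determined by the leaf order, so the reverse-leaf-order DFS is indeed the time reversal of $S$; you flag this as routine, and it is.
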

The (pathwise) random doubling strategy then performs successive RDFS's of {\em unions} of levels $\cup_{i \le j} R_j$.
\begin{theorem}
The (pathwise) random doubling strategy $s$ is a $5$-approximation for the optimal randomized pathwise search. That is, $\rho_s \le 5 \rho$.
\end{theorem}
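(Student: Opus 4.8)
The plan is to adapt the proof of Theorem~\ref{thm:5/4} to the pathwise setting, using Lemma~\ref{lem:pw-RDFS} in place of Lemma~\ref{lem:RDFS} and accounting for the fact that a pathwise RDFS which returns to the root traverses every arc of its tree twice. Write $f=f_Q$, and note that, up to a null set, $\bigcup_{i\le j}R_i=Q_T[d_{j+1}]$, so the $j$th iteration of $s$ is a pathwise RDFS of the subtree $Q_T[d_{j+1}]$, and a \emph{completed} iteration therefore costs exactly $2\lambda(Q_T[d_{j+1}])=2f(d_{j+1})$. Fix a point $x\ne O$, put $d=d(x)$, and let $k$ be the integer with $2^{k-1}\le d<2^k$, so $x\in Q_k$. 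Exactly as in the proof of Theorem~\ref{thm:5/4}, the (random) iteration $J$ in which $x$ is found is the least $j$ with $d_{j+1}>d$; it lies in $\{k-1,k\}$ with $\Pr[J=k]=(d-2^{k-1})/2^{k-1}$. Since $x$ is located during iteration $J$ after iterations $i<J$ have each been completed, Lemma~\ref{lem:pw-RDFS} (applied conditionally on the radii, with $\mathcal T=Q_T[d_{J+1}]$) gives
\[
\mathbb E\,T(s,x)\ \le\ \mathbb E\!\left[\,2\sum_{i<J}f(d_{i+1})\ +\ f(d_{J+1})\,\right].
\]

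Next I would evaluate this expectation by the same case analysis as in Theorem~\ref{thm:5/4}: iterations $i\le k-2$ are always completed, iteration $k-1$ is completed iff $d_k\le d$, iteration $k$ is never completed, and the last term equals $f(d_k)$ or $f(d_{k+1})$ according to whether $J=k-1$ or $J=k$. Integrating over the uniform radii and writing $F(r):=\int_0^r f(t)\,dt$, the sum over $i\le k-2$ telescopes; in particular, combining its $F(2^{k-1})$ term with the matching term coming from iteration $k-1$ produces the clean bound
\[
\mathbb E\,T(s,x)\ \le\ 2\sum_{j\le k-1}\frac{F(2^j)}{2^{\,j}}\ +\ \frac{F(d)+F(2^k)}{2^{k-1}}\ +\ \frac{d-2^{k-1}}{2^{k-1}}\cdot\frac{F(2^{k+1})-F(2^k)}{2^k}.
\]

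The key ingredient is the estimate $F(r)\le \tfrac12 r^2\rho$ for every $r>0$. First I would observe that Theorem~\ref{thm:lbound} also holds in the pathwise model: its proof uses only $T(S,u_A)\ge d(A)+\lambda(A)/2$, and this is valid for a pathwise searcher as well, since reaching $A$ takes time at least $d(A)$, so by time $t$ the searcher has visited a subset of $A$ of measure at most $(t-d(A))^+$, whence the average discovery time under $u_A$ is at least $d(A)+\lambda(A)/2$. Applying Theorem~\ref{thm:lbound} with $A=Q[r]$ gives $f(r)=\lambda(Q[r])\le 2\rho\,\overline d(Q[r])$; together with the identity $F(r)=r f(r)-\int_{x\in Q[r]}d(x)\,d\lambda(x)=f(r)\bigl(r-\overline d(Q[r])\bigr)$ and $\overline d(Q[r])\in[0,r]$, this yields $F(r)\le 2\rho\,\overline d(Q[r])\bigl(r-\overline d(Q[r])\bigr)\le \tfrac12 r^2\rho$.

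Finally I would substitute this estimate into the displayed bound. Using $F(2^j)/2^j\le 2^{j-1}\rho$, the geometric tail $2\sum_{j\le k-1}F(2^j)/2^j$ is at most $2^k\rho$; the term $F(2^k)/2^{k-1}$ is at most $2^k\rho$; the term $F(d)/2^{k-1}$ is at most $d^2\rho/2^k$; and the last term is at most $4(d-2^{k-1})\rho$. Since $4\cdot 2^{k-1}=2^{k+1}$, the two copies of $2^k\rho$ cancel against the $-4\cdot 2^{k-1}\rho$ produced by the last term, leaving $\mathbb E\,T(s,x)\le d^2\rho/2^k+4d\rho$; dividing by $d$ and using $d<2^k$ gives $\hat T(s,x)<5\rho$, and taking the supremum over $x\ne O$ (with $\hat T(s,O)=0$) yields $\rho_s\le 5\rho$. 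I expect the one delicate point to be the bookkeeping in the middle step: one must use, in iteration $J$, the distribution of the radius $d_{J+1}$ conditioned on $x$ not having been found earlier, and the regrouping of the two $F(2^{k-1})$ contributions is precisely what makes the constant come out as $5$ rather than a larger value.
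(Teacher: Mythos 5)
Your proof is correct, and while it shares the paper's overall skeleton (sum the costs of completed RDFS iterations, bound the final partial iteration via Lemma~\ref{lem:pw-RDFS}, compare against the uniform-ball Hider of Theorem~\ref{thm:lbound}), the computational core is genuinely different. The paper rearranges the conditional cost into $\sum_{j\le J}2L^j$ and then reuses the bound $\mathbb{E}(L^J)\le \tfrac{5}{4}d(x)\rho$ from the proof of Theorem~\ref{thm:5/4}, invoking a rescaled version $\mathbb{E}(L^{J-j})\le \tfrac{5}{4}\,2^{-j}d(x)\rho$ at every scale and summing $\sum_{j\ge 0}2\cdot 2^{-j}\cdot\tfrac54=5$; this makes the pathwise result a corollary of the (fairly intricate) three-term case analysis in the expanding-search proof. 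You instead work directly with $F(r)=\int_0^r f_Q$, and your single estimate $F(r)\le\tfrac12 r^2\rho$ --- obtained from $f_Q(r)\le 2\rho\,\overline d(Q[r])$ and the identity $F(r)=f_Q(r)\bigl(r-\overline d(Q[r])\bigr)$, then maximizing the quadratic --- is exactly the continuous analogue of Lemma~\ref{lem:rho-bound} and replaces the entire $L^J_1,L^J_2,L^J_3$ bookkeeping. I checked your intermediate display: expanding $\mathbb{E}\bigl[2\sum_{i<J}f(d_{i+1})+f(d_{J+1})\bigr]$ term by term (using $\mathbb{E}f(d_j)=(F(2^j)-F(2^{j-1}))/2^{j-1}$ and the correct conditioning of $d_k$ on the event $J=k-1$) reproduces it exactly, and the final cancellation giving $\hat T(s,x)\le (d/2^k)\rho+4\rho<5\rho$ is right. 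Two points you handle that deserve emphasis: you correctly note that Theorem~\ref{thm:lbound} transfers to the pathwise game because its proof only uses $T(S,u_A)\ge d(A)+\lambda(A)/2$ (the paper leaves this implicit when it imports the $5/4$ bound), and your use of $f_Q$ for $\lambda(Q_T[\cdot])$ is a legitimate upper bound since tree distances dominate network distances. What your route buys is self-containedness and a cleaner invariant; what it gives up is the explicit link to the $5/4$ constant, which the paper's presentation makes visible as $5=4\cdot\tfrac54$.
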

\begin{proof}
As in the proof of Theorem~\ref{thm:5/4}, suppose that the randomized competitive ratio of $s$ is maximized at some point $x$ which is contained in $Q_k$, for some $k$. Again, we define $J$ as the index of the level containing $x$. We use Lemma~\ref{lem:pw-RDFS} to write down an expression for the expected search time of $x$, conditioned on $J$, and which we will denote by $T(s,x|J)$.
\[
T(s,x|J) = \lambda(\cup_{i \le J} R_i) + \sum_{j \le J-1} 2 \lambda(\cup_{i \le j} R_i).
\]
Rearranging, we have 
\begin{align*}
T(s,x|J) &= \lambda(R_J) + \lambda(\cup_{i \le J-1} R_i) + \sum_{j \le J-1} \lambda(R_j) + \lambda(\cup_{i \le j-1} R_i) + \lambda(\cup_{i \le j} R_i)  \\
& = \left( \lambda(R_J)  + \sum_{j \le J-1}  \lambda(R_j) \right) + \left( \lambda(\cup_{i \le J-1} R_i)+ \sum_{j \le J-2} \lambda(\cup_{i \le j} R_i) \right) + \sum_{j \le J-1} \lambda(\cup_{i \le j} R_i) \\
& = \sum_{j \le J} \lambda(R_i) + 2 \lambda(\cup_{i \le j-1} R_i).
\end{align*}
Now taking expectations, with respect to $J$, we obtain
\begin{align}
T(s,x)  = \sum_{j \le J} 2 \mathbb{E} (L^j), \label{eq:pw-ratio}
\end{align}
where $L^J = \lambda(R_J)/2 + \lambda(\cup_{i \le J-1} R_i)$ is defined as in the proof of Theorem~\ref{thm:5/4}, and similarly for $L^{J-1},L^{J-2},$ etc. We showed in the proof of Theorem~\ref{thm:5/4} that
\[
\frac{\mathbb{E}(L^J)}{d(x) \rho} \le 5/4,
\]
and it follows that
\[
\frac{\mathbb{E}(L^{J-j})}{ 2^{-j} d(x) \rho} \le 5/4.
\]
So by~(\ref{eq:pw-ratio}),
\begin{align*}
\frac{\hat{T}(s,x)}{ \rho} &\le \sum_{j \ge 0} 2 \cdot 2^{-j} \frac{\mathbb{E}(L^{J-j})}{2^{-j}d(x) \rho} \\
& \le \sum_{j \ge 0} 2 \cdot 2^{-j} \cdot 5/4 \\
&=5.
\end{align*}
\end{proof}

\section{Implementation and complexity issues}
\label{sec:implementation}

In this section we discuss issues related to the implementation of our search strategies. 

\paragraph{Infinitesimally small tours}
For the purposes of the analysis, we allow the doubling strategies of Sections~\ref{subsec:randomized.better} and~\ref{sec:pathwise} to start with an infinite number of infinitesimally small tours. This is a standard way of getting around the technical complication that any strategy which starts by a search to a constant distance $c>0$ from the root cannot be constant-competitive, and has been applied in the analysis of searching in the infinite line and the infinite star, e.g.,~\cite{gal:minimax}. In practice, of course, the Searcher will start its search at some small distance from the root, and we may assume, as often in the Computer Science literature on search algorithms, that the Hider is at distance at least 1 from the origin. The overall analysis remains the same, at the expense of some negligible additive contribution to the overall search cost that does not affect the competitive ratios. 

\paragraph{Representation of the network}
If the network $Q$ is bounded, then there is a straightforward way of representing it as an undirected, weighted graph, in which the edge weights correspond to arc lengths. However, if $Q$ is unbounded, we need certain assumptions in regards to how the Searcher can access the network. One such way is to assume an oracle that given a parameter $r>0$ returns the subnetwork $Q[r]$ of $Q$ that corresponds to all points in $Q$ at a radius $r$ around the root, and which in turn can be encoded as a weighted graph, since it is bounded. For a Hider $H$, and a given search strategy, we denote by $O_H$ the number
of accesses to the oracle that the strategy requires (and which we aim to bound).

\paragraph{}
With these observations in mind, we can now discuss the implementation of our strategies. Concerning the ``waterfilling'' deterministic strategy of Section~\ref{sec:deterministic}, it suffices for the Searcher to access the oracle a logarithmic number of times, namely $O_H=O(\log(d(H)))$. Specifically, the oracle will reveal the subnetworks $Q[2^i]$, with 
$i \in [1, \lceil \log(d(H)) \rceil]$. Within any given subnetwork, the strategy can be implemented in time polynomial in its
graph representation, by simply keeping track of the ``active'' edges, namely arcs of the network which have been only partially searched.

Similarly, for the doubling strategies of Sections~\ref{subsec:randomized.better} and~\ref{sec:pathwise}, a logarithimic number of oracle accesses, in the distance of the Hider, will suffice. For a given level $j$ in the execution of these algorithms, all associated actions of the strategies, namely finding a shortest path tree, performing an RDFS traversal of the tree, or finding a Chinese Postman tour can be done in time polynomial in the size of the graph representation of the corresponding level.

\section{Conclusion}
\label{sec:conclusion}

In this work we studied expanding and pathwise search in a general, possibly unbounded network. We focused on the competitive ratio
of the network as a measure for the efficiency of a search strategy, and gave the first constant-approximation mixed strategies in
these settings. In particular, we addressed two open questions from~\cite{ANGELOPOULOS201951}, namely how to derive efficient
strategies that are i) randomized; and ii) apply to a general network and not only to discrete trees. 

The obvious open problem from our work is to further improve the approximation of the randomized competitive ratios, or identify
more classes of networks for which optimal strategies can be found (although our result of Section~\ref{subsec:randomized.y} 
shows that any such  identification will unavoidably exclude some very simple networks). Another direction is to consider searching
for multiple hiders, as an extension to multi-hider search in a star under the competitive ratio~\cite{multi-target} or relaxations
of the competitive ratio~\cite{oil,hyperbolic}.

Last, concerning bounded networks, and beyond competitive analysis of search strategies, an interesting, and perhaps surprisingly open 
problem is to find a pathwise search strategy that minimizes the expected time to locate the Hider, assuming that the Hider's distribution is known. 

\paragraph{Acknowledgements}
This research benefited from the support of the FMJH Program Gaspard Monge for optimization and operations research and their interactions with data science, and from the support from EDF, Thales and Orange.

\bibliographystyle{apacite}
\bibliography{targets}

\end{document}